\numberwithin{equation}{section}
\newtheorem{thm}{Theorem}
\newtheorem{prop}[thm]{Proposition}
\newtheorem{lem}[thm]{Lemma}
\newtheorem{fact}[thm]{Fact}
\newtheorem{cor}[thm]{Corollary}
\newtheorem{ques}[thm]{Question}
\newtheorem*{thm*}{Theorem}
\newtheorem*{thmA}{Theorem A}
\newtheorem*{thmB}{Theorem B}
\numberwithin{thm}{section}
\theoremstyle{definition}
\newtheorem{defn}[thm]{Definition}
\newtheorem{ex}[thm]{Example}
\newcommand{\N}{\mathbb{N}}
\newcommand{\R}{\mathbb{R}}
\newcommand{\Z}{\mathbb{Z}}
\newcommand{\Q}{\mathbb{Q}}
\newcommand{\card}{\operatorname{card}}
\author{Philipp Hieronymi}
\address{Mathematical Institute\\ University of Bonn\\
Endenicher Allee 60\\ 53115 Bonn\\ Germany}
\email{hieronymi@math.uni-bonn.de}
\author{Chris Schulz}
\address{Department of Mathematics\\University of Illinois at Urbana-Champaign\\1409 West Green Street\\Urbana, IL 61801\\ USA}
\address{Department of Pure Mathematics\\
200 University Avenue West\\
Waterloo, Ontario\\
N2L 3G1\\
Canada
}
\email{chris.schulz@uwaterloo.ca}
\title{A strong version of Cobham's theorem}
\begin{document}

\maketitle

\begin{abstract}
 Let $k,\ell\geq 2$ be two multiplicatively independent integers. Cobham's famous theorem states that a set $X\subseteq \mathbb{N}$ is both 
$k$-recognizable and $\ell$-recognizable if and only if it is definable in Presburger arithmetic. Here we show the following strengthening:
let $X\subseteq \N^m$ be $k$-recognizable, let $Y\subseteq \N^n$ be $\ell$-recognizable such that both $X$ and $Y$ are not definable in Presburger arithmetic. Then the first-order logical theory of $(\mathbb{N},+,X,Y)$ is undecidable. This is in contrast to a well-known theorem of B\"uchi that the first-order logical theory of $(\mathbb{N},+,X)$ is decidable.
\end{abstract}

\section{Introduction}

Let $k,\ell \in \N_{\geq 2}$. We say $k$ and $\ell$ are \textbf{multiplicatively independent} if $k^a\neq \ell^b$ for all nonzero $a,b\in \Z$. 
A subset $X$ of $\N^n$ is \textbf{$k$-recognizable} if the set of $k$-ary representations of $X$ is recognized by some finite automaton. We refer the reader to Allouche and Shallit \cite{AS} for details. \textbf{Cobham's theorem} \cite{Cobham} states that a subset of $\N$ can only be both $k$- and $\ell$-recognizable for multiplicatively independent $k$ and $\ell$ if it is definable in \textbf{Presburger arithmetic}, that is first-order definable in $(\N,+)$ (or equivalently: a finite union of arithmetic progressions). This result was extended to subsets of $\N^n$ by Sem\"enov \cite{Semenov}. For surveys on Cobham's theorem, see Durand and Rigo \cite{durandrigo} and Bruy\`ere et al. \cite{BHMV}. Subsets of $\N^n$ definable in Presburger arithmetic are well understood: by Ginsburg and Spanier \cite{GinsSpanier} a subset $X\subseteq \N^m$ is first-order definable in $(\N,+)$ if and only if $X$ is semilinear.  In particular, the order relation on $\N$ is first-order definable in $(\N,+)$\footnote{Indeed, the order relation is first-order definable in $(\N,+)$ using the first-order formula $\exists z \ x+z =y$.}. For a recent survey on Presburger arithmetic, see Haase \cite{Hasse-Survey}.  As noted in \cite{durandrigo}, the essence of Cobham's theorem is that recognizability depends strongly on the choice of the base $k$. Here we strengthen this conclusion of the Cobham-Sem\"enov theorem as follows: Given two subsets of some Cartesian power of $\N$ such that both are not first-order definable in $(\N,+)$, but one is $k$- and the other one is $\ell$-recognizable, then they are not only distinct, but the resulting theory of the two sets over Presburger arithmetic is undedicable.
\begin{thmA}
 Let $k,\ell\in \N_{\geq 2}$ be multiplicatively independent. Let $X\subseteq \N^m$ be $k$-recognizable and let $Y\subseteq \N^n$ be $\ell$-recognizable such that both $X$ and $Y$ are not first-order definable in $(\N,+)$. Then the first-order logical theory ${\rm FO}(\mathbb{N},+,X,Y)$ is undecidable.
\end{thmA}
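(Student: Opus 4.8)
The plan is to prove undecidability by showing that $(\N,+,X,Y)$ interprets true arithmetic $(\N,+,\cdot)$. The contrast with B\"uchi's theorem is the guiding principle: a single $k$-recognizable set yields only a reduct of $(\N,+,V_k)$, where $V_k(n)$ denotes the largest power of $k$ dividing $n$, which is decidable; what must be exploited is that a \emph{non-Presburger} $k$-recognizable set already forces enough base-$k$ valuation data to be definable that, combined with the analogous base-$\ell$ data extracted from $Y$, the two incommensurable scales interact richly enough to recover multiplication. I would rely on three external facts: a set is $k$-recognizable if and only if it is definable in $(\N,+,V_k)$; Cobham's internal characterization of which $k$-recognizable sets are Presburger (in one variable: eventual periodicity of the minimal automaton of the accepted language); and Villemaire's theorem that $(\N,+,V_k,V_\ell)$ is undecidable for multiplicatively independent $k,\ell$.

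\textbf{Step 1: a canonical base-$k$ set.} The first task is to show that $(\N,+,X)$ defines $\Lambda_p:=\{p^n:n\in\N\}$ for some $p$ that is a power of $k$. The natural route is to pass first to a one-dimensional non-Presburger $k$-recognizable set definable in $(\N,+,X)$ --- via the effective description of recognizable sets together with a section/projection argument, where Cobham's internal characterization is invoked to rule out the degenerate case that all such reducts are eventually periodic --- and then to inspect the minimal automaton: its failure to be eventually periodic manifests, at a cofinal family of scales $p^n$ (with $p=k^d$, the exponent $d$ read off from the loop structure), as a first-order detectable change in the behaviour of $X$, so that $\Lambda_p$ can be isolated by a formula. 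Running the same argument on $Y$ produces $\Lambda_q:=\{q^n:n\in\N\}$ definable in $(\N,+,Y)$ with $q$ a power of $\ell$; since $k$ and $\ell$ are multiplicatively independent, so are $p$ and $q$.

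\textbf{Step 2: undecidability from two incommensurable rulers.} It then suffices to show that $(\N,+,\Lambda_p,\Lambda_q)$ is undecidable for multiplicatively independent $p,q\ge 2$. The cleanest finish would be to define $V_p$ (and symmetrically $V_q$) inside $(\N,+,\Lambda_p,\Lambda_q)$ and then invoke Villemaire's theorem. The point is that $\Lambda_p$ alone is too weak to define $V_p$, but $\Lambda_q$ supplies the missing base-$q$ information: by multiplicative independence the ratios $q^m/p^n$ are dense in $(0,\infty)$ --- equivalently $(n\log p-m\log q \bmod 1)$ is dense --- so the powers of $q$ sit in every possible position relative to the powers of $p$, and this density is exactly what allows a formula to pin down, for a given $x$, the exponent of the largest power of $p$ dividing $x$. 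Alternatively one interprets $(\N,+,\cdot)$ directly, using the two rulers to lay out a two-dimensional grid and expressing multiplication by a carry/staircase argument in the spirit of the coincidence lemmas underlying Cobham--Sem\"enov. Either way the use of multiplicative independence is quantitative and unavoidable.

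\textbf{Expected main obstacle.} Step 2 is the crux. Having $\Lambda_p$ and $\Lambda_q$ available yields, by itself, only Presburger-definable sets together with the two rulers and some sparse combinations; converting the chaotic relative spacing of the rulers into a genuinely multiplicative relation is the hard part, and is where an equidistribution or linear-forms-in-logarithms input must be deployed with care. A secondary difficulty is Step 1: one must show that \emph{every} non-Presburger $k$-recognizable set --- in any number of variables --- already defines some $\Lambda_{k^d}$, and excluding the degenerate configurations there rests on the full strength of Cobham's internal characterization together with a careful dimension reduction.
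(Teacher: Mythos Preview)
Your Step 1 is essentially correct and is exactly what the paper does: it quotes B\`es's theorem (Fact~\ref{fact:bes} in the paper) that any non-Presburger $k$-recognizable set already defines $k^{\N}$ in $(\N,+,X)$, so Theorem~A reduces to Theorem~B, the undecidability of ${\rm FO}(\N,+,k^{\N},\ell^{\N})$.

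Your Step 2, however, cannot work as stated. Both routes you propose --- defining $V_p$ in $(\N,+,\Lambda_p,\Lambda_q)$ and then invoking Villemaire, or interpreting $(\N,+,\cdot)$ directly --- are provably impossible. The paper cites a result of the second author (building on Friedman--Miller) that $(\N,+,k^{\N},\ell^{\N})$ does \emph{not} define multiplication. Moreover, B\`es showed that $(\N,+,V_k,L)$ defines multiplication whenever $L$ is $\ell$-recognizable and non-Presburger; hence if $V_p$ were definable in $(\N,+,\Lambda_p,\Lambda_q)$, multiplication would be too, a contradiction. So the ``cleanest finish'' you describe is blocked, and your heuristic that density of $q^m/p^n$ lets a formula pin down $V_p(x)$ must fail. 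This is precisely why the problem was open for decades: the obvious reductions to Villemaire or to arithmetic do not go through.

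The paper's actual Step 2 bypasses multiplication entirely and encodes the halting problem directly. The key definable object is $S(x)=\{y\in\ell^{\N}:\text{the second-largest power of }k\text{ in the }k\text{-ary expansion of }y\text{ is }x\}$. Using density (and a measure-theoretic disjointness lemma, Lemma~\ref{lem:mainlemma}) one shows that the finite tuples $(S(k^n)\cap I,\dots,S(k^{n+R})\cap I)$ realize every combinatorial pattern as $n$ and the interval $I\subseteq\ell^{\N}$ vary. This genericity lets one uniformly quantify over arbitrary finite subsets of a grid and hence code Turing machine runs, following the proof that weak monadic second-order logic over the grid is undecidable. No $V_k$, no multiplication.
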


\noindent First, we give more details about the connection of Theorem A to earlier work and the Cobham-Sem\"enov theorem. Throughout, when we write \emph{definable} without further qualifications, we mean \emph{first-order definable without parameters}. For $k\in \N_{\geq 2}$, let $k^{\N}$ denote $\{k^i : i\in \N\}$, and let $V_k : \N \to k^{\N}$ be the function that maps $0$ to $1$ and maps $x\in \N_{\geq 1}$ to the largest power of $k$ that divides $x$. We consider the structure $(\N,+,V_k)$, often called \textbf{B\"uchi arithmetic of base $k$}. The following result is due
to B\"uchi \cite{Buechi60}, although the original statement was incorrect as pointed out by MacNaughton \cite{mcnaughton_1963}.

\begin{fact}[{see \cite[Theorem 6.1 \& Corollary 6.2]{BHMV}}]\label{fact:BHMV}
Let $k\geq 2$ and $X\subseteq \N^n$. Then $X$ is $k$-recognizable if and only if $X$ is definable in $(\N,+,V_k)$. Moreover, ${\rm FO}(\N,+,V_k)$ is decidable.
\end{fact}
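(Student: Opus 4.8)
The plan is to prove the stated equivalence by two separate constructions — one translating $(\N,+,V_k)$-formulas into automata, the other translating automata into formulas — and then to extract decidability from the effectiveness of the first construction. Throughout one fixes a convention for coding a tuple $(x_1,\dots,x_n)\in\N^n$: write each $x_i$ in base $k$, pad with leading zeros to a common length, and feed an automaton the resulting columns of $n$ digits synchronously. A preliminary (routine) lemma records that adding or deleting leading zero columns does not change acceptance, so ``$k$-recognizable'' is a well-defined property of subsets of $\N^n$.

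For the direction that every $X\subseteq\N^n$ definable in $(\N,+,V_k)$ is $k$-recognizable, I would induct on the structure of a defining formula. The base cases are that $\{(x,y,z):x+y=z\}$, $\{(x,y):y=V_k(x)\}$, and equality are $k$-recognizable: an automaton for addition scans the digit columns maintaining a carry bit, and an automaton for the graph of $V_k$ checks that $y$ is a power of $k$ whose unique nonzero digit is aligned with the least-significant nonzero digit of $x$. The inductive steps are the classical closure properties of regular languages: union and intersection by the product construction, negation by determinization and complementation of the accepting set, and existential quantification by projecting the alphabet and using nondeterminism — the one place needing care is leading-zero bookkeeping, handled by the preliminary lemma. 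Every step is effective, so this yields an algorithm converting a formula into an automaton.

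The converse — every $k$-recognizable $X$ is $(\N,+,V_k)$-definable — is the substantive part and is where I expect the main obstacle. It has two ingredients. First, one must show that ``digit extraction'' is available: the set $k^{\N}$ is definable as $\{x:V_k(x)=x\}$, and from this together with $+$ one can define the relation ``$p\in k^{\N}$ and the base-$k$ digit of $x$ at position $p$ equals $d$''. Second, given a DFA $\mathcal{A}=(Q,\delta,q_0,F)$ with $Q=\{0,\dots,r-1\}$ reading the columns of $(x_1,\dots,x_n)$, one encodes an accepting run: existentially quantify numbers $z_0,\dots,z_{r-1}$ so that the digit of $z_q$ at a position $p$ is $1$ exactly when $\mathcal{A}$ sits in state $q$ at the corresponding point of the run, then assert (i) the start condition at the extremal position, (ii) the transition condition — a statement universally quantified over positions $p\in k^{\N}$, using digit extraction to read off the current state, the input column, and the successor state — and (iii) that the state recorded at the other extremal position lies in $F$. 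Assembling these constraints, and in particular pinning down the boundary conditions at the least- and most-significant digits and the exactly-one-active-state requirement, is the bookkeeping-heavy heart of the argument; I expect that fiddliness, rather than anything conceptually deep, to be the real work.

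Decidability of ${\rm FO}(\N,+,V_k)$ then follows from the first direction alone. Given a sentence $\varphi$, run the inductive translation to compute a finite automaton $\mathcal{A}_\varphi$ recognizing the (zero-variable) relation defined by $\varphi$; since stripping a leading existential quantifier corresponds to a projection, $\varphi$ holds in $(\N,+,V_k)$ if and only if $L(\mathcal{A}_\varphi)\neq\emptyset$, which is decidable by the standard reachability test. (Equivalently, one may interpret $(\N,+,V_k)$ in the weak monadic second-order theory of $(\N,<)$ and quote B\"uchi's decidability theorem; the automaton argument above is essentially that reduction made explicit.)
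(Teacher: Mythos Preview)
The paper does not give its own proof of this statement: it is recorded as a \emph{Fact} with a citation to \cite[Theorem 6.1 \& Corollary 6.2]{BHMV}, and is used as a black box. There is therefore nothing in the paper to compare your argument against.

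That said, your sketch is a correct outline of the standard proof as it appears in the cited reference (and in the broader literature): the definable-to-recognizable direction by structural induction on formulas using closure of regular languages under Boolean operations and projection; the recognizable-to-definable direction by first defining digit extraction from $V_k$ and $+$, and then expressing the existence of an accepting run of a given DFA by existentially quantifying over numbers that encode the state sequence; and decidability as an immediate corollary of the effectiveness of the first translation together with the emptiness test for finite automata. Your remark that the bookkeeping around leading zeros and boundary positions is the only delicate point is accurate, and your alternative route via interpretation in weak monadic second-order logic of $(\N,<)$ is also the standard one.
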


\noindent  We refer the reader to \cite{BHMV} for a detailed discussions of this connection between recognizability and first-order definability. Decidability fails for more expressive theories. By Villemaire \cite{Villemaire}, when $k,\ell$ are multiplicatively independent, then the theory ${\rm FO}(\N,+,V_k,V_{\ell})$ is undecidable. B\`es \cite{Bes97} strengthened this result by showing that under the same assumptions on $k$ and $\ell$, the theory ${\rm FO}(\N,+,V_k,L)$ is already undecidable whenever $L\subseteq \N^n$ is $\ell$-recognizable, but not definable in $(\N,+)$. The work in \cite{Bes97} further establishes that under the above assumptions the structure $(\N,+,V_k,L)$ (and hence also $(\N,+,V_k,V_{\ell})$) defines multiplication and thus every arithmetic set. \newline

\noindent We already claimed that Theorem A is a strengthening of the Cobham-Sem\"enov theorem. Indeed, by the following argument which first appeared in \cite[Second proof of Theorem 5.1]{Bes97}, the latter can be deduced from the former. Suppose that $X\subseteq \N^n$ is both $k$- and $\ell$-recognizable where $k$ and $\ell$ are multiplicatively independent, but not definable in $(\N,+)$. Then ${\rm FO}(\N,+,X,X)$ is undecidable by Theorem A. However, ${\rm FO}(\N,+,X)$ is decidable by Fact \ref{fact:BHMV}, a contradiction\footnote{Of course, the main result from \cite{Bes97} can be used directly:  Theorem A obviously generalizes Bes' undecidability result, which is already known to imply the Cobham-Sem\"emov theorem.}.\newline

\noindent The proof of Theorem A reduces to the case of very simple $k$- and $\ell$-recognizable sets by the following result from \cite{Bes97}.

\begin{fact}[{\cite[Theorem 3.1]{Bes97}}]\label{fact:bes} Let $k\geq 2$. If $X\subseteq \N^n$ is $k$-recognizable and not definable in $(\N,+)$, then $k^{\N}$ is definable in $(\N,+,X)$.
\end{fact}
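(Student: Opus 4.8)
The plan is to recover the range of $V_k$ from $X$ by ``inverting'' a definition of $X$ in B\"uchi arithmetic. By Fact~\ref{fact:BHMV}, $X$ is definable in $(\N,+,V_k)$, while by hypothesis $X$ is not definable in $(\N,+)$, hence not semilinear by Ginsburg--Spanier; so any finite automaton $M$ recognizing the base-$k$ representations of $X$ (reading tuples of digits in parallel, most significant digit first) must genuinely depend on the scale at which it reads. It suffices to define in $(\N,+,X)$ the relation ``$u$ and $v$ lie between the same two consecutive powers of $k$'', i.e.\ $\exists i\,(k^i\le u,v<k^{i+1})$: once this relation $\sim$ is available, $k^{\N}$ is obtained from it by an elementary computation in $(\N,+)$ (essentially as the set of least elements of the $\sim$-classes, up to a bounded correction coming from the representation convention for small numbers).

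The core of the argument is to manufacture such a ``base-$k$ ruler'' inside $(\N,+,X)$. Since $X$ is not semilinear, $M$ cannot have the ``Presburger shape'' in which every cycle reachable on valid inputs is irrelevant to acceptance: there is a state $q$, reached by reading the representation of some number, that lies on a cycle $\gamma$ whose traversal changes acceptance in a way not consistent with eventual periodicity of $X$. Pumping $\gamma$ then produces, for infinitely many scales $k^i$, a pair of inputs that agree up to position $k^i$ but have opposite membership status in $X$; extracting the numbers encoded by these inputs yields, for each such $i$, a ``test pattern'' localized at scale $k^i$. Using closure of $(\N,+,X)$ under addition and Boolean operations, one assembles these test patterns into a single first-order formula $\psi(u,v)$ that holds precisely when the reading heads for $u$ and $v$ are positioned at the same power of $k$, i.e.\ $\psi$ defines $\sim$. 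In the one-dimensional case this often takes the more concrete shape of first producing a definable infinite set $S=\{s_0<s_1<\cdots\}$ with $s_{j+1}=k\,s_j$ for all large $j$, from which $k^{\N}$ is recovered by a short argument with $+$. Combining $\psi$ with the reduction of the previous paragraph gives the theorem.

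I expect this manufacturing step to be the main obstacle: it must convert ``$X$ is not eventually periodic'', a statement asserting the \emph{absence} of a bound, into an honest first-order definition of $\sim$. The difficulty is structural rather than merely quantitative --- for instance, in base~$2$ the set of integers with an even number of binary digits equal to~$1$ has all gaps bounded yet is not eventually periodic, so no argument based on gap sizes or on a single numerical invariant can succeed, and one must track the transition pattern of $M$ across scales. The passage from $n=1$ to general $n$ is a second source of difficulty: examples such as $\{(a,b)\in\N^2 : b<k^a\}$ show that no one-dimensional section of a non-semilinear $k$-recognizable set need be non-periodic, so the pumping and synchronization must be performed directly on the automaton reading $n$-tuples of digits rather than on a one-dimensional reduct.
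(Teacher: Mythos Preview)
The paper does not supply its own proof of this statement: Fact~\ref{fact:bes} is quoted verbatim as \cite[Theorem 3.1]{Bes97} and used as a black box to reduce Theorem~A to Theorem~B. So there is nothing in the paper to compare your argument against; any proof you give is automatically ``a different route'' in the trivial sense.

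That said, what you have written is not a proof but a plan with its hardest step left open, and you say so yourself. The reduction ``once $\sim$ is definable, $k^{\N}$ is definable'' is fine, and the diagnosis that the automaton $M$ must fail to be of ``Presburger shape'' is correct in spirit. The gap is precisely the paragraph that begins ``Using closure of $(\N,+,X)$ under addition and Boolean operations, one assembles these test patterns into a single first-order formula $\psi(u,v)$\dots''. You produce, for infinitely many $i$, a pair of inputs distinguishing scale $k^i$; but these pairs depend on $i$, and nothing you have said explains how a \emph{single} first-order formula in the language $\{+,X\}$ uniformly picks out scale $k^i$ for all $i$. Pumping gives you an infinite family of witnesses, not a definable one, and your own remarks about the even-weight set and about higher arity show you are aware that no naive invariant does the job. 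In B\`es's actual proof this step is carried by a careful case analysis (ultimately resting on the Muchnik-style characterisation of Presburger-definable $k$-recognizable sets) that produces a uniformly definable family of ``rulers''; your sketch contains the right intuition for why such rulers should exist but none of the mechanism for building them. As written, the proposal would be a reasonable introduction to a proof, but the proof itself is still missing.
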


\noindent Thus in order to establish Theorem A, it is enough to prove the following.

\begin{thmB}
Let $k,\ell\in \N_{\geq 2}$ be multiplicatively independent. Then the theory ${\rm FO}(\mathbb{N},+,k^{\N},\ell^{\N})$ is undecidable.
\end{thmB}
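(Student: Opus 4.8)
The plan is to prove that $(\mathbb{N},+,k^{\N},\ell^{\N})$ interprets $(\mathbb{N},+,\cdot)$; undecidability of ${\rm FO}(\mathbb{N},+,k^{\N},\ell^{\N})$ then follows from the undecidability of the latter (and Theorem~A follows by Fact~\ref{fact:bes}, as explained above). The structural heart of the matter is that neither predicate can be dispensed with: by Fact~\ref{fact:BHMV} the set $k^{\N}$ is $k$-recognizable, so $(\mathbb{N},+,k^{\N})$ is a reduct of $(\mathbb{N},+,V_k)$ and hence decidable, and symmetrically $(\mathbb{N},+,\ell^{\N})$ is decidable; thus all of the complexity has to be produced by the interaction of $k^{\N}$ and $\ell^{\N}$, and this is where multiplicative independence of $k$ and $\ell$ has to be used in a quantitative way. (One cannot simply reduce to B\`es's theorem by defining $V_k$: this does not appear to be possible from $k^{\N}$ and $\ell^{\N}$ alone, since $V_k$ encodes low-order base-$k$ digit information to which neither sparse predicate gives access.)

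I would start by fixing coordinates on each exponential scale. Using $+$ together with $k^{\N}$ one defines the successor map on $k^{\N}$ (multiplication by the constant $k$), the floor map $\lambda_k\colon\mathbb{N}\to k^{\N}$ sending $x$ to the largest power of $k$ that is at most $x$, and the equivalence relation ``$x$ and $y$ have the same base-$k$ length''; the analogous data $\lambda_\ell$ etc.\ come from $\ell^{\N}$. Composing across the two scales yields the definable ``matching'' map $m_k\colon k^{\N}\to\ell^{\N}$, $k^i\mapsto\lambda_\ell(k^i)$, together with its companion, hence a definable pairing between a power of $k$ and the power of $\ell$ of comparable magnitude. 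Writing $\theta=\log_\ell k$, multiplicative independence says precisely that $\theta\notin\mathbb{Q}$; in fact $\theta$ is transcendental by the Gelfond--Schneider theorem. Consequently the exponent of $m_k(k^i)$ is the Beatty sequence $\lfloor i\theta\rfloor$, the sequence of its increments is a genuinely aperiodic mechanical (Sturmian) word, and the corresponding predicate is definable once transported back to $k^{\N}$ via the maps above.

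The heart of the proof is to convert this aperiodic cross-scale data, together with the ambient addition, into a definition of multiplication (or of any of the standard equivalents, e.g.\ the divisibility relation). The obstruction to be overcome is that, as the decidability of each half shows, neither scale on its own lets one ``count'' an exponent or recover low-order digit information; the second base is brought in to break this barrier. The mechanism is a quantitative form of multiplicative independence: by Weyl equidistribution (equivalently the three-distance theorem) the fractional parts $\{i\theta\}$ --- equivalently the ratios $k^i/m_k(k^i)\in[1,\ell)$ --- are equidistributed, so that for any prescribed rescaling of a power of $k$ there is a power of $\ell$ lying just below it, and the position of that power of $\ell$ relative to the $k$-scale runs through a full range of values as the input varies. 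Chaining such location steps back and forth between the two scales lets additive data on one exponent line be read off as length data on the value line and fed back as additive data on the other exponent line; carried out carefully, this should suffice to pin down multiplication. This is consistent with, and refines, Fact~\ref{fact:bes}: no tame reduct can be enough, and both bases are genuinely required.

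The step I expect to be the main obstacle is exactly this last one: turning the equidistribution input into an honest first-order definition. Two distinct difficulties arise. First, equidistribution is asymptotic and non-uniform, whereas a single first-order formula must work for all inputs at once; bridging this gap calls for effective control of the rational approximations to $\theta$, for which the transcendence of $\theta$ --- through Baker-type lower bounds for linear forms in logarithms --- guarantees that the relevant error terms are not pathological. Second, even granted such effectivity, one must verify that the definable relation so obtained genuinely encodes multiplication and does not collapse --- as both $(\mathbb{N},+,k^{\N})$ and $(\mathbb{N},+,V_k)$ do --- to a decidable reduct; checking this non-collapse, which is what forces the simultaneous use of both bases together with the aperiodicity of $\theta$, will be the technical core of the argument.
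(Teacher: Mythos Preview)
Your plan has a fatal flaw at the very first step: the structure $(\mathbb{N},+,k^{\N},\ell^{\N})$ does \emph{not} interpret $(\mathbb{N},+,\cdot)$. The paper states this explicitly in the introduction, citing \cite{S-undefinability}: using Friedman--Miller style arguments one shows that multiplication is not definable in $(\mathbb{N},+,k^{\N},\ell^{\N})$, so the ``define multiplication'' strategy that worked for Villemaire and B\`es (who had $V_k$, not merely $k^{\N}$) is provably unavailable here. Your own parenthetical remark that $V_k$ cannot be recovered because the sparse predicates give no access to low-order digit information is exactly right, and the same obstruction blocks multiplication. So the program you outline cannot be completed, and the two ``difficulties'' you flag at the end are not merely technical obstacles but symptoms of an approach that is doomed in principle.

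What the paper does instead is bypass multiplication entirely and encode the halting problem directly. The key definable object is the set $S(x)\subseteq\ell^{\N}$ of those $y\in\ell^{\N}$ for which $x$ is the second-largest power of $k$ in the $k$-ary expansion of $y$; this is uniformly definable in $(\mathbb{N},+,k^{\N},\ell^{\N})$. Your density/equidistribution intuition is actually relevant here, but it is deployed for a different purpose: a careful quantitative use of the density of $k^{\N}\ell^{-\N}$ in $\R_{>0}$ (Lemma~\ref{lem:mainlemma}) shows that the tuple of finite sets $(S(k^n),S(k^{n+1}),\dots,S(k^{n+R}))$, restricted to a suitable window in $\ell^{\N}$, can realize \emph{any} prescribed pattern (Lemma~\ref{lem:pickoutvalues}). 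This genericity lets one uniformly encode arbitrary tuples of finite subsets of $\{0,\dots,R-1\}^2$ by elements of a definable set $D$, and from there one follows the standard reduction of the halting problem to the weak monadic second-order theory of the grid $(\N^2,s_0,s_1)$. The upshot is undecidability without ever defining multiplication --- and indeed Section~4 shows the theory has the $2$-independence property, confirming genuine model-theoretic wildness short of interpreting arithmetic.
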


\noindent The undecidability of ${\rm FO}(\mathbb{N},+,k^{\N},\ell^{\N})$, even just the case $k=2$ and $\ell=3$ was a long-standing open problem mentioned in the literature as early as the mid 1980s (see the survey articles \cite{Bes-Survey,MV-Survey,point2010expansion}), and has sometimes been attributed to Lou van den Dries. It is worth mentioning that even ${\rm FO}(\N,+,2^{\N},3^{\N})$ includes many non-trivial number-theoretic statements about $2$ and $3$.  An important corollary of Baker's famous theorem on linear forms \cite{Baker} is that for every $m\in \N$, there exists $C(m)\in \N$ such that if $n_1,n_2 \in \N$  with $2^{n_1}-3^{n_2}=m$, then $n_1,n_2\leq C$. This statement is equivalent to the truth of the following first-order sentence in $(\N,+,2^{\N},3^{\N})$:
\[
\forall u \in \N \exists v \in \N  \forall x \in k^{\N} \forall y \in \ell^{\N} \ (x \geq v \vee y \geq v) \rightarrow |x-y|>u.
\]

\noindent In contrast to earlier work in \cite{Villemaire,Bes97}, we prove Theorem B by encoding the halting problem in ${\rm FO}(\mathbb{N},+,k^{\N},\ell^{\N})$ rather than by defining multiplication in $(\mathbb{N},+,k^{\N},\ell^{\N})$. Indeed, the second author showed in \cite{S-undefinability}  using arguments from Friedman and Miller \cite{FM} that the latter strategy fails and  $(\mathbb{N},+,k^{\N},\ell^{\N})$ does not define multiplication.\newline

\noindent It is worth outlining why ${\rm FO}(\mathbb{N},+,k^{\N},\ell^{\N})$ is expressive enough to encode the halting problem. Let $k,\ell\in \N_{>1}$ be multiplicatively independent. For $x\in k^{\N}$, we define $S(x)\subseteq \ell^{\N}$ as the set of all $y\in \ell^{\N}$ such that $x$ is the second largest power of $k$ that appears with a non-zero digit in the $k$-ary representation of $y$. Just as an example, consider $k=2$ and $\ell=3$. In this case $\ell^3=k^4+k^3+k+1$, and thus $\ell^3 \in S(k^3).$ It is rather easy to show that $S(x)$ is uniformly definable in $(\N,+,k^{\N},\ell^{\N})$; that means, not only is $S(x)$ definable in $(\N,+,k^{\N},\ell^{\N})$, but the formula used to define $S(x)$ does not depend on $x$ and uses $x$ only as a parameter.  It follows from the aforementioned corollary of Baker's theorem that each $S(x)$ is finite. However, while each $S(x)$ is finite, there is no uniform bound. Indeed, we establish that we can find every possible pattern in tuples of subsets of $\ell^{\N}$ of the form $(S(k^m),S(k^{m+1}),\dots,S(k^{m+n}))$ (see Lemma \ref{lem:pickoutvalues} for a precise statement). Using this genericity of tuples and the fact that $S(x)$ is uniformly definable in $(\N,+,k^{\N},\ell^{\N})$, we are able to code certain statements about the existence of tuples of finite subsets of $\N^2$ in $(\N,+,k^{\N},\ell^{\N})$. Following the proof of the undecidability of the weak monadic second order theory of the grid\footnote{The grid is the structure $(\N^2,s_0,s_1)$, where $s_0(m,n):=(m+1,n)$ and $s_1(m,n):=(m,n+1)$.} (see Thomas \cite[Theorem 16.5]{Thomas}), we show that this is enough to code the halting problem in ${\rm FO}(\mathbb{N},+,k^{\N},\ell^{\N})$.\newline

\noindent A previous version of this paper appeared in the \emph{Proceedings of the 54th Annual ACM SIGACT Symposium on Theory of Computing (2022)}.

\subsection*{Acknowledgements} The authors were partially supported by NSF grant DMS-1654725. The first author was partially supported by the \emph{Hausdorff Center for Mathematics} at the University of Bonn. We thank Chris Miller, Jeffrey Shallit and Erik Walsberg for helpful conversations around the topic of this paper.

\subsection*{Notation} The set $\N$ contains $0$. Throughout, definable  means first-order definable without parameters. We use $i,j,k,\ell,m,n$ to denote elements of $\N$. For a set $X$, we use $\card(X)$ to denote the cardinality of $X$, and we use $\mathcal{P}(X)$ to denote the power set of $X$.

\section{A consequence of density}

It is well-known that $k^{\N}\ell^{-\N}$ is dense in $\R_{>0}$ by Kronecker's Approximation Theorem (see \cite[Theorem 7.8]{Apostol}). 
Our proof of Theorem B relies heavily on the observation that certain consequences of this density can be coded in $\mathrm{FO}(\mathbb{N},+,k^\N, \ell^\N)$. In this section, we establish one such consequence that plays a crucial role later.
Before we state this consequence, we recall (and prove) the following basic result from real analysis.

\begin{lem}\label{lem:intervals}
Let $I=[a_0,b_0)$ be an interval in $\R$, and let $(a_n)_{n\in \N_{>0}}$, $(b_n)_{n\in \N_{>0}}$ be two sequences of real numbers such that $\{a_n \ : \ n \in \N_{>0}\}$ is dense in $I$ and for every $n\in \N$
\begin{enumerate}
    \item $a_n<b_n$,
    \item $(b_{n+1} - a_{n+1}) < \frac{1}{4} (b_n - a_n)$.
\end{enumerate}
Then there exists $m\in \N_{>0}$ such that $[a_m,b_m)\subseteq I$ and
\[
[a_m,b_m) \cap \bigcup_{n=1}^{m-1} [a_n,b_n) = \emptyset. 
\]
\end{lem}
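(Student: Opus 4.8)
The plan is to choose $[a_m, b_m)$ greedily as the first interval in the sequence that is both contained in $I$ and short enough to "hide" in a gap left by the earlier intervals. The key quantitative point is that the total length $\sum_{n \geq m} (b_n - a_n)$ of the tail of the sequence is controlled by a geometric series: condition (2) gives $b_n - a_n < 4^{-(n-m)}(b_m - a_m)$ for $n > m$, so $\sum_{n > m}(b_n - a_n) < \frac{1}{3}(b_m - a_m)$. Thus the intervals $[a_n, b_n)$ for $n > m$ can cover at most a $\frac{1}{3}$-fraction of the length of any interval. The strategy is to iterate this idea finitely often.

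First I would fix an index $N$ large enough that $[a_n, b_n) \subseteq I$ for all $n \geq N$; this is possible since the lengths $b_n - a_n \to 0$ and (I would need to check) the $a_n$ eventually stay away from the right endpoint $b_0$ — actually more carefully, since the lengths shrink geometrically and $a_n \in I$ is not guaranteed, I would instead only restrict attention to those (infinitely many, by density of $\{a_n\}$ in $I$) indices $n$ with $a_n$ in a fixed compact subinterval $[a_0, c] \subset I$ and $b_n - a_n$ small enough that $b_n < b_0$; call this infinite set of "good" indices $G$. Among the good indices, let $m_1 = \min G$. If $[a_{m_1}, b_{m_1})$ already misses $\bigcup_{n=1}^{m_1 - 1}[a_n, b_n)$ we are done. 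Otherwise, $[a_{m_1}, b_{m_1})$ meets some earlier interval; I would then look inside $[a_{m_1}, b_{m_1})$: the earlier intervals $[a_1,b_1), \dots, [a_{m_1 - 1}, b_{m_1 - 1})$ are finitely many, and since $\{a_n : n \in G\}$ is still dense in the (nonempty open) portion of $[a_{m_1}, b_{m_1})$ not covered by those finitely many intervals — here I use that finitely many intervals cannot cover a subinterval unless it is inside one of them, plus the geometric-tail bound to guarantee the uncovered part is genuinely there — I can pick the least good index $m_2 > m_1$ with $[a_{m_2}, b_{m_2})$ landing in that uncovered gap.

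The main obstacle, and the step requiring the most care, is making the recursion terminate. The cleanest way I see: let $F$ be the (finite) set of the first $N$ intervals, or more precisely track which of the finitely many "old" intervals we still have to avoid. Each time we recurse, we move to a strictly later index but we do not obviously reduce the number of obstacles. The fix is to run the argument in one shot rather than recursively: take $m$ to be the least index in $G$ such that $b_m - a_m$ is smaller than the minimal gap created among $[a_1, b_1), \dots$ — no; instead, the right formulation is to use the geometric bound globally. Set $m$ to be any good index with $m$ so large that $\sum_{n \geq m}(b_n - a_n) < \tfrac13 (b_{m'} - a_{m'})$ is dwarfed appropriately; concretely, among good indices pick $m$ minimizing... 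Let me state the clean version: since $\sum_{n=1}^{\infty}(b_n - a_n) < \infty$ is false in general (only the tail from any point converges), I instead argue: pick the good index $m$ such that $[a_m, b_m) \subseteq I$ and $m$ is chosen after the finitely many "large" intervals, then note $\bigcup_{n=1}^{m-1}[a_n, b_n)$ restricted to near $a_m$ has small measure is the wrong tack — the honest finish is: by density choose a good index $m$ with $a_m$ lying in the open set $I \setminus \overline{\bigcup_{n=1}^{N}[a_n, b_n)}$ (nonempty since $N$ intervals can't cover $I$ when, say, we first shrink to a subinterval disjoint from those $N$ sets' closure), taking $m$ large enough that $b_m - a_m$ is less than the distance from $a_m$ to that finite union; then $[a_m, b_m)$ avoids $[a_1, b_1), \dots, [a_N, b_N)$, and for the indices $N < n < m$ we use that each such $[a_n, b_n)$ has length $< 4^{-(n-N)}(b_N - a_N)$ so their union near $a_m$ has length $< \tfrac13(b_N - a_N)$, and shrinking the subinterval at the start by this slack makes room. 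I expect the bookkeeping in reconciling "avoid the first $N$" with "avoid indices between $N$ and $m$" to be the fiddly part, handled by choosing the initial subinterval of $I$ to have length exceeding $\tfrac{4}{3}$ times the relevant scale and invoking density there.
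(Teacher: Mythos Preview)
Your plan circles around the right idea but never lands it cleanly, and it contains one factual slip: you write that $\sum_{n\ge 1}(b_n-a_n)<\infty$ ``is false in general (only the tail from any point converges).'' In fact condition (2) is stated for every $n\in\N$, including $n=0$, so $b_n-a_n<4^{-n}(b_0-a_0)$ for all $n\ge 1$ and the \emph{full} series converges with sum at most $\tfrac13(b_0-a_0)$. This single global bound is exactly what makes the argument short, and your attempt to split indices into ``$\le N$'' and ``between $N$ and $m$'' is an artifact of not trusting it.

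The paper's proof uses that bound in one shot. Since $\mathfrak L\bigl(I\cap\bigcup_{n\ge 1}[a_n,b_n)\bigr)\le\tfrac13(b_0-a_0)<\mathfrak L(I)$, the set $I\setminus\bigcup_{n\ge 1}[a_n,b_n)$ has positive measure and hence contains two points $x_1<x_2$. By density of $\{a_n\}$ in $I$ there is a least $m\ge 1$ with $x_1<a_m<x_2$. Because $x_2$ is uncovered, $b_m\le x_2$, so $[a_m,b_m)\subseteq(x_1,x_2)\subseteq I$. Because $m$ is minimal, every earlier $a_n$ lies outside $(x_1,x_2)$; since $x_1$ and $x_2$ are themselves uncovered, each earlier interval $[a_n,b_n)$ must lie entirely to the left of $x_1$ or entirely to the right of $x_2$. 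That is the whole proof.

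Your recursive scheme ($m_1,m_2,\ldots$) has a termination problem that you correctly flag, and your fallback (``shrink the initial subinterval by this slack'') would, if carried out carefully, reproduce the argument above inside a subinterval---the preliminary choice of $N$ buys nothing. The step you call ``the fiddly part'' disappears entirely once you pick \emph{two} uncovered points rather than one and take the first $a_m$ between them.
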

	\begin{proof}
	Set $I_n:=[a_n,b_n)$ for each $n\in \N_{\geq 1}$ and set  $J:=I \cap (\bigcup_{n=1}^{\infty} I_n)$. For $X\subseteq \R$, we write $\mathfrak{L}(X)$ for the Lebesgue measure of $X$.
	We now observe that by (2)
	\[
	\mathfrak{L}(J) \leq (b_0-a_0) \sum_{n\in \N_{> 0}} 4^{-i} = \frac{1}{3}(b_0-a_0) < \mathfrak{L}(I).
	\]
	Thus $I\setminus J$ is infinite. Let $x_1,x_2\in I\setminus J$ be such that $x_1 < x_2$. Let $m\in \N_{> 0}$ be the minimal element in $\N_{>0}$ such that $x_1 < a_m < x_2$. Since $x_2 \notin J$, we have 
	\[[a_m,b_m)\subseteq (x_1,x_2) \subseteq I.\]
	Since $m$ was chosen to be minimal, we have that $a_n< x_1$ or $a_n>x_2$ for all $n\in \N_{>0}$ with $n<m$. Thus for all $n\in \N_{>0}$ with $n<m$ we have that $[a_n,b_n) \cap (x_1,x_2) = \emptyset$. Hence
	\[
	[a_m,b_m) \cap \bigcup_{n=0}^{m-1} [a_n,b_n) = \emptyset. 
	\]
	\end{proof}

\noindent For the rest of this section, \emph{fix $k,\ell \in \N_{>1}$ such that $k<\ell$ and $k,\ell$ are multiplicatively independent}. In the remainder of this section we prove the following lemma.

\begin{lem}\label{lem:mainlemma}
Let $R \in \N$. Then there exists $N \in \N_{>R}$ such that for all $n \in \N_{\geq N}$ and $r\in \{0,\dots,R-1\}$, there exist $s \in \N_{>n}$ and $t \in \N_{>0}$ such that
\begin{enumerate}
    \item $\ell^{-t}[k^{s}, k^{s} + k^R) \subseteq [k^n + k^r, k^n + k^{r+1})$,
    \item  for all $u\in \N$ and $v \in \{1,\dots,t-1\}$, 
    \[\ell^{-t}[k^{s}, k^{s} + k^R) \cap \ell^{-v}[k^{u}, k^{u} + k^R) = \emptyset.\]
\end{enumerate}
\end{lem}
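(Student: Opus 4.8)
The plan is to reduce the statement to a single application of Lemma~\ref{lem:intervals}. Fix $n\ge N$ and $r\in\{0,\dots,R-1\}$, where $N$ (depending only on $k,\ell,R$) will be pinned down in the course of the argument, and set $I:=[k^n+k^r,k^n+k^{r+1})$, an interval of length $k^r(k-1)$ contained in $[k^n,k^{n+1})$. Call any set of the form $\ell^{-v}[k^u,k^u+k^R)$ with $v\ge 1$ and $u\ge 0$ a \emph{basic interval of scale $v$}; it has length $\ell^{-v}k^R$, and these are exactly the intervals condition~(2) forbids meeting (for scales $v<t$). So the goal is to find a basic interval that is contained in $I$ and disjoint from every basic interval of strictly smaller scale.

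The heart of the matter is to show, for $N$ large enough, that the basic intervals meeting $I$ are extremely sparse: (i) each has length $<\tfrac{1}{4} k^r(k-1)$ (a quarter of the length of $I$); (ii) no two distinct basic intervals of the same scale meet $I$; and (iii) if scales $v<v'$ both carry a basic interval meeting $I$, then $\ell^{v'-v}>4$. A basic interval of scale $v$ meets $I$ exactly when $\ell^v(k^n+k^r)-k^R<k^u<\ell^v(k^n+k^{r+1})$ for some $u$, and since $k^n$ dwarfs $k^R$ once $n$ is large, this pins $k^u$ down very tightly. If additionally $\ell^v\le 4k^{R-r}/(k-1)$ — that is, if the interval has length $\ge\tfrac{1}{4} k^r(k-1)$ — the constraint forces $k^{u-n}$ to within absolute distance less than $1$ of $\ell^v$, which is impossible because multiplicative independence of $k,\ell$ gives $|k^a-\ell^b|\ge 1$ for the exponents in play; this is~(i). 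Granting~(i), an occupied basic interval is too short to straddle $I$, so it has an endpoint lying in $I$, and that endpoint is of the form $\ell^{-v}k^u(1+\delta)$ with $u\ge n$ and $0\le\delta\le k^{R-n}$. Two such endpoints at scales $v\le v'$, both in $I$, therefore have ratio within $1\pm O(k^{R-n})$, so $k^{u'-u}$ differs from $\ell^{v'-v}$ by at most $O(k^{R-n}\ell^{v'-v})$; as these are distinct positive integers, $\ell^{v'-v}\ge\Omega(k^{n-R})$, which for $n$ large excludes $v=v'$ with $u\ne u'$, giving~(ii), and forces $\ell^{v'-v}>4$ when $v<v'$, giving~(iii). (An explicit admissible $N$, of the form $R+O(1)$, can be read off from these estimates.)

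With (i)--(iii) secured, I would apply Lemma~\ref{lem:intervals} as follows. Since $k^{\N}\ell^{-\N}$ is dense in $\R_{>0}$, so is its scaling $\ell^{-1}k^{\N}\ell^{-\N}=\{k^u\ell^{-v}:u\ge 0,\ v\ge 1\}$; in particular this set meets $I$ in infinitely many points, so by~(ii) infinitely many scales carry a basic interval meeting $I$. List them as $v_1<v_2<\cdots$, let $[a_j,b_j)$ be the unique basic interval of scale $v_j$ meeting $I$, and put $[a_0,b_0):=I$. Then $a_j<b_j$ for all $j$; $b_1-a_1=\ell^{-v_1}k^R<\tfrac{1}{4}(b_0-a_0)$ by~(i); $b_{j+1}-a_{j+1}=\ell^{-v_{j+1}}k^R<\tfrac{1}{4}\,\ell^{-v_j}k^R=\tfrac{1}{4}(b_j-a_j)$ by~(iii); and $\{a_j:j\ge 1\}$ is dense in $I$ (given $x$ in the interior of $I$, density of $\ell^{-1}k^{\N}\ell^{-\N}$ yields some $\ell^{-v}k^u\in I$ arbitrarily close to $x$, and then necessarily $v=v_j$ and $a_j=\ell^{-v}k^u$ for some $j$). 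Lemma~\ref{lem:intervals} thus gives $m$ with $[a_m,b_m)\subseteq I$ and $[a_m,b_m)\cap\bigcup_{j=1}^{m-1}[a_j,b_j)=\emptyset$. Writing $[a_m,b_m)=\ell^{-t}[k^s,k^s+k^R)$ with $t:=v_m\ge 1$, condition~(1) holds by construction. For~(2), fix $v\in\{1,\dots,t-1\}$ and $u\in\N$: if $\ell^{-v}[k^u,k^u+k^R)$ does not meet $I$ then it is disjoint from $[a_m,b_m)\subseteq I$; if it does, then by~(ii) it equals $[a_j,b_j)$ for the index $j$ with $v_j=v$, and $v_j=v<v_m$ forces $j<m$, so again it is disjoint from $[a_m,b_m)$. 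Finally $s>n$, since $[a_m,b_m)\subseteq I$ gives $\ell^{-t}k^s\ge k^n+k^r>k^n$, whence $k^s>\ell^t k^n\ge\ell k^n>k^{n+1}$ because $\ell>k$.

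The main obstacle is the number-theoretic input (i)--(iii) — making precise, together with an honest value of $N$, the assertion that the only basic intervals capable of meeting the thin interval $I$ sit at widely separated scales. This is exactly where multiplicative independence of $k$ and $\ell$ is used, via the bound $|k^a-\ell^b|\ge 1$; the remainder of the proof is bookkeeping around Lemma~\ref{lem:intervals}.
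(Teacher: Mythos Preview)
Your argument is correct and follows the same overall architecture as the paper---set up $I$, show that the ``basic intervals'' meeting $I$ are sparse enough to feed into Lemma~\ref{lem:intervals}, and then read off $s,t$ from the output---but the mechanism you use for the sparsity step is genuinely different. The paper introduces the multiplicative group $([1,k),\odot)$ and the homomorphism $\mu:\R_{>0}\to[1,k)$ with kernel $k^{\Z}$: multiplicative independence makes $\mu(\ell^{0}),\dots,\mu(\ell^{-M})$ distinct, so one can choose $\delta>1$ with the neighbourhoods $\mu(\ell^{-i}(\delta^{-1},\delta))$ pairwise disjoint, and then $N$ is taken large enough that every $I_v$ has $\mu$-image contained in the corresponding neighbourhood; this yields a fixed gap $>M$ between any two occupied scales. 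You bypass this machinery entirely: from two occupied scales $v<v'$ you divide the containment inequalities to get $k^{u'-u}/\ell^{v'-v}=1+O(k^{R-n})$, and then the bare integer inequality $|k^{a}-\ell^{b}|\ge 1$ (for $(a,b)\ne(0,0)$, by multiplicative independence) forces $\ell^{v'-v}\ge\Omega(k^{n-R})$. Your route is more elementary and gives a scale gap that actually grows with $n$, whereas the paper's group-theoretic packaging makes the dependence of $N$ on $R$ (through $M$ and $\delta$) a bit more transparent and modular. Both approaches then finish identically via Lemma~\ref{lem:intervals}.
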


\noindent As mentioned above, this lemma plays the central role in the proof of Theorem B. It allows us to find $\ell^{a},\ell^{b} \in \ell^{\N}$ for which the second-largest powers of $k$ appearing in the $k$-ary representations are small, while the second-largest powers of $k$ appearing in the $k$-ary representations of elements of $\ell^{\N}$ between $\ell^{a}$ and $\ell^{b}$ are relatively large. In order to motivate this lemma, we make this last statement more precise. 
Let $R,N,n,r,s,t\in \N$ satisfy the assumptions of Lemma \ref{lem:mainlemma} and its conclusion. By density of $k^{-\N}\ell^{\N}$ in $\R_{>0}$, there exist $i,j\in \N_{>0}$ such that
\begin{equation}\label{eq:explain}
k^{-i}\ell^j \in \ell^{-t}[k^s, k^s + k^R) \subseteq [k^n + k^r, k^n + k^{r+1}).
\end{equation}
Multiplying by $k^i$, we get that $\ell^j\in [k^{n+i} + k^{r+i}, k^{n+i} + k^{r+i+1})$. Since $r<n$, this implies that the two largest powers of $k$ appearing with non-zero coefficient in the $k$-ary representation of $\ell^j$ are $k^{n+i}$ and $k^{r+i}$. More is true; multiplying \eqref{eq:explain} by $k^i\ell^t$, we have that
$\ell^{j+t}\in [k^{s+i}, k^{s+i} + k^{R+i}]$. Thus the largest power appearing with non-zero coefficient in the $k$-ary representation of $\ell^{j+t}$ is $k^{s+i}$, and the second-largest power is smaller than or equal to $k^{R+i}$, as is the case for $\ell^j$. However, multiplying (2) by $k^i$, we get that for all $v \in \{1,\dots,t-1\}$, there is no $u\in \N$ such that
\[
\ell^{j+v} \notin [k^{u+i},k^{u+i}+k^{R+i}).
\]
Hence the second-largest power appearing with non-zero coefficient in the $k$-ary representation of $\ell^{j+v}$ must be larger than $k^{R+i}$.
\newline 

\noindent Before we start with the proof of Lemma \ref{lem:mainlemma} we fix some notation. We consider the group $([1,k),\odot,1)$, where
\[
x\odot y =  \begin{cases}
    x y , & \text{if } 1 \leq xy < k \\
    \frac{xy}{k}, & \text{otherwise}.
  \end{cases}
\]
Let $\lambda : \R_{>0} \to k^{\Z}$ map $x$ to $\max ((-\infty, x] \cap k^{\Z})$, and let $\mu: \mathbb{R}_{>0} \to [1,k)$ map $x$ to $\frac{x}{\lambda(x)}$. Observe that $\mu$ is a group homomorphism between $(\R_{>0},\cdot,1)$ and $([1,k),\odot,1)$ whose kernel is $k^{\Z}.$
 Let $\nu : \R_{>0} \to k^{\Z}$ map $x$ to the unique $k^n \in \Z$ such that $x \in \big[k^{n - \tfrac{1}{2}}, k^{n + \tfrac{1}{2}}\big)$. Note that $(\mu, \nu) : \R_{>0} \to [1, k) \times k^{\Z}$ is a bijection. See Figure \ref{fig:gridexample} for a visualization of these functions.
 
 \begin{figure}[t]
\begin{tikzpicture}[scale=0.6, every node/.style={scale=0.6}]

\draw (0,0) .. controls (0,1) and (6,2) .. (6,1);
\draw (6,1) .. controls (6,0) and (0,1) .. (0,2);
\draw (0,2) .. controls (0,3) and (6,4) .. (6,3);
\draw (6,3) .. controls (6,2) and (0,3) .. (0,4);
\draw (0,4) .. controls (0,5) and (6,6) .. (6,5);
\draw (6,5) .. controls (6,4) and (0,5) .. (0,6);
\draw (0,6) .. controls (0,7) and (6,8) .. (6,7);
\draw (6,7) .. controls (6,6) and (0,7) .. (0,8);

\filldraw (0,0) circle (3pt);
\node at (0,0.4) {$\ell^0$};
\filldraw (5.58,2.68) circle (3pt);
\node at (5.58,3.08) {$\ell^1$};
\filldraw (1.55,6.92) circle (3pt);
\node at (1.55,7.32) {$\ell^2$};

\draw (0,-3) .. controls (0,-2) and (6,-2) .. (6,-3);
\draw (0,-3) .. controls (0,-4) and (6,-4) .. (6,-3);

\filldraw (0,-3) circle (3pt);
\node at (0.6,-2.9) {$\mu(\ell^0)$};
\filldraw (5.58,-3.4) circle (3pt);
\node[below] at (5.58,-3.5) {$\mu(\ell^1)$};
\filldraw (1.55,-2.34) circle (3pt);
\node[right] at (1.55,-1.94) {$\mu(\ell^2)$};

\path[->, ultra thick] (3,0) edge node [right] {\scalebox{1.5}{$\boldsymbol{\mu}$}} (3,-2);

\draw[ultra thick] (6.5,1.2) .. controls (7,1.2) and (6.5,2) .. (7,2);
\draw[ultra thick] (6.5,2.8) .. controls (7,2.8) and (6.5,2) .. (7,2);
\draw[ultra thick] (6.5,3.2) .. controls (7,3.2) and (6.5,4) .. (7,4);
\draw[ultra thick] (6.5,4.8) .. controls (7,4.8) and (6.5,4) .. (7,4);
\draw[ultra thick] (6.5,5.2) .. controls (7,5.2) and (6.5,6) .. (7,6);
\draw[ultra thick] (6.5,6.8) .. controls (7,6.8) and (6.5,6) .. (7,6);

\path[->, ultra thick] (7.2,2) edge (9,2);
\path[->, ultra thick] (7.2,4) edge (9,4);
\path[->, ultra thick] (7.2,6) edge node [above] {\scalebox{1.5}{$\boldsymbol{\nu}$}} (9,6);

\filldraw (9.5, 2) circle (6pt);
\filldraw (9.5, 4) circle (6pt);
\filldraw (9.5, 6) circle (6pt);

\path [dashed] (9.5,0) edge (9.5,8);

\node at (-1.5,4) {\scalebox{2}{$\boldsymbol{\mathbb{R}_{>0}}$}};
\node at (-1.5,-3) {\scalebox{2}{$\boldsymbol{[1,k)}$}};
\node at (11,4) {\scalebox{2}{$\boldsymbol{k^\mathbb{Z}}$}};

\end{tikzpicture}
    \caption{Visualization of the functions $\mu$ and $\nu$ from the proof of Lemma \ref{lem:mainlemma}}
    \label{fig:gridexample}
\end{figure}

\subsection{Proof of Lemma \ref{lem:mainlemma}} 
Fix $R \in \N$ for the rest of this section. Let $E\in \N$ be sufficiently large such that $k^{-E} \leq \frac{1}{4}$. Obviously, $E>1$. Let $M$ be sufficiently large such that $\ell^M > k^{R+E}$. Take $\delta\in \R_{>1}$ sufficiently close to $1$ such that for all $i,j\in\{0,\dots,M\}$ with $i\neq j$
\[
 \mu\Big(\ell^{-i}(\delta^{-1},\delta)\Big)\cap \mu\Big(\ell^{-j}(\delta^{-1},\delta)\Big) = \emptyset.
\]
Note that such a $\delta$ exists, because $k$ and $\ell$ are multiplicatively independent. Take $N$ to be sufficiently large such that $\frac{k^N + k^R}{k^N} < \delta$ and $N > R + E$. We will now show that $N$ satisfies the conclusion of Lemma \ref{lem:mainlemma}.\newline

\noindent Let $n\in \N_{\geq N}$ and $r\in \{0,\dots,R-1\}$. We now find $s\in \N_{>n}$ and $t\in \N_{>0}$ such that statements (1) and (2) of Lemma \ref{lem:mainlemma} hold.  \newline

\noindent Set $I_0=[k^n + k^r, k^n + k^{r+1})$.
Since  $n \geq N > R + E$, we have that 
\begin{equation}\label{eq:prelemma} 
k^n + k^{r+1} \leq k^n + k^{R} \leq k^n (1 + k^{-E}) \leq k^n \cdot \frac{5}{4} < k^{n+\tfrac{1}{2}}.
\end{equation}
Thus $\nu(I_0) = \{ k^n \}.$

\begin{lem}\label{lem:unique_u}
Let $u\in\N_{>0}$ and $v \in \N_{>0}$ be such that $\ell^{-v}[k^{u}, k^{u} + k^n)\cap I_0\neq \emptyset$. Then $u\geq n$ and 
\[
u = \log_k\left(\frac{k^n}{\nu(\ell^{-v})}\right).
\]
\end{lem}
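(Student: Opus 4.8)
The plan is to choose a witness from the intersection and track it through the map $\nu$. Pick $x\in \ell^{-v}[k^{u},k^{u}+k^n)\cap I_0$ and set $y:=\ell^v x$, so that $x\in I_0$ and $y\in[k^{u},k^{u}+k^n)$. Since $\ell^{-v}=x/y$, it will suffice to pin down $\nu(x)$ and $\nu(y)$ and to check that no carry occurs when forming the quotient. We already know $\nu(I_0)=\{k^n\}$, so $\nu(x)=k^n$; the content is to prove $\nu(y)=k^u$ together with $u>n$, after which a short logarithm estimate finishes the job.

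First I would establish $u\geq n$. Since $x\in I_0$ we have $x>k^n$, and since $v\geq 1$ and $\ell>k\geq 2$ we have $\ell^v\geq\ell\geq 3$; hence $y=\ell^v x>3k^n$. Combining this with $y<k^{u}+k^n$ gives $2k^n<k^{u}$. If $u\leq n$ this would force $2k^n<k^n$, which is absurd, so $u\geq n+1$; consequently $k^{u-n}$ is a positive integer greater than $2$, and therefore $k^{u-n}\geq 3$.

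Next I would locate $y$ precisely. From $k^{u-n}\geq 3$ we get $k^n\leq\tfrac13 k^{u}$, so $y<k^{u}+k^n\leq\tfrac43 k^{u}<k^{u+1/2}$, using $(\tfrac43)^2<2\leq k$; together with $y\geq k^{u}>k^{u-1/2}$ this gives $y\in[k^{u-1/2},k^{u+1/2})$, that is $\nu(y)=k^u$. To conclude I compare logarithms: from $\nu(x)=k^n$ and $x>k^n$ we get $\log_k x\in(n,\,n+\tfrac12)$, and from $\nu(y)=k^u$ and $y\geq k^{u}$ we get $\log_k y\in[u,\,u+\tfrac12)$, so $\log_k(\ell^{-v})=\log_k x-\log_k y$ lies in $(n-u-\tfrac12,\ n-u+\tfrac12)$. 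Hence $\ell^{-v}\in[k^{(n-u)-1/2},k^{(n-u)+1/2})$, i.e.\ $\nu(\ell^{-v})=k^{n-u}$, and therefore $\log_k\bigl(k^n/\nu(\ell^{-v})\bigr)=\log_k(k^{u})=u$.

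The whole argument is elementary; the only thing that needs care is the behaviour at small bases, namely that $\ell\geq 3$, that a power of $k$ strictly exceeding $2$ is already at least $3$, and that the constants $5/4$ (coming from $\nu(I_0)=\{k^n\}$) and $4/3$ are below $k^{1/2}$ — all of which hold for every $k\geq 2$ and $\ell>k$. Note that no bound on the size of $v$ is used, only $v\geq 1$ (hence $\ell^v\geq\ell$).
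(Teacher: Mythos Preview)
Your proof is correct. The overall strategy matches the paper's---locate everything via $\nu$ and use the already established fact $\nu(I_0)=\{k^n\}$---but the bookkeeping is organised differently. The paper first shows $u\ge n$ by arguing that for $u<n$ the entire interval $\ell^{-v}[k^u,k^u+k^R)$ lies below $k^{n-1}$; it then isolates a small sublemma (``any $[a,b)$ meeting $I_0$ with $b/a\le (k^n+k^R)/k^n$ has $\nu(a)=k^n$'') and applies it at the left endpoint $a=\ell^{-v}k^u$, concluding via the equivariance $\nu(k^u z)=k^u\nu(z)$. You instead pick a witness $x$ in the intersection, show directly that $\nu(x)=k^n$ and $\nu(\ell^v x)=k^u$, and then squeeze $\log_k(\ell^{-v})$ into a window of length~$1$; in the process you obtain the sharper bound $k^{u-n}\ge 3$ rather than merely $u\ge n$. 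An incidental bonus of your version is that it handles the interval $[k^u,k^u+k^n)$ exactly as written in the statement, whereas the paper's own proof tacitly works with the smaller interval $[k^u,k^u+k^R)$ (which is what is actually used downstream).
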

\begin{proof}
We first show that $u\geq n$. Towards a contradiction, suppose that $u<n$. Since $v>0$ and $\ell > k$, we have that $k^u\ell^{-v}<k^{n-2}$. Since $R<n-1$, we get
\[
\ell^{-v}(k^u + k^R) < 2 k^{n-2} \leq k^{n-1}.
\]
Hence $k^n \notin \nu(\ell^{-v}[k^{u}, k^{u} + k^R))$. This contradicts our previous observation that $\nu(I_0)=\{k^n\}$ by \eqref{eq:prelemma}.
\newline

\noindent Let $a,b \in \R$ be such that $[a,b)\cap I_0\neq \emptyset$ and $\frac{b}{a} \leq  \frac{k^n + k^R}{k^n}$. We now show that $\nu(a)=k^n$. Since $b>k^n$,
\[
a \geq k^n \frac{k^n}{k^n + k^R} > k^n \frac{k^n}{k^{n+\tfrac{1}{2}} } = k^{n - \tfrac{1}{2}}.
\]
Since $a<k^n + k^{r+1}$, we have $\nu(a)=k^n$.\newline

\noindent As $u \geq n$, we have that $\frac{k^{u} + k^R}{k^{u}} \leq \frac{k^n + k^R}{k^n}$. Thus by the previous paragraph  $\nu(\ell^{-v} k^{u}) = k^n$ and  $k^{u}=\frac{k^n}{\nu(\ell^{-v})}.$
\end{proof}

\noindent We now introduce a function $U: \N_{>0} \to \N_{>0}$ mapping $v\in \N$ to $\log_k(\frac{k^n}{\nu(\ell^{-v})})$. By Lemma \ref{lem:unique_u}, for all $u\in\N_{>0}$ and $v \in \N_{>0}$, we have that $u=U(v)$ whenever $\ell^{-v}[k^{u}, k^{u} + k^R)\cap I_0\neq \emptyset$.\newline

\noindent For $v\in \N_{>0}$, we let $I_v$ be $ \ell^{-v}[k^{U(v)}, k^{U(v)} + k^R)$. The length of $I_v$ is $\ell^{-v} k^R$.

\begin{lem}\label{lem:intervalsize}
Let $v\in \N$. Then $\mu(I_{v})\subseteq \mu(\ell^{-v})\odot [1,\delta)$.
\end{lem}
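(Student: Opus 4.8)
The plan is to compute $\mu$ on the interval $I_v = \ell^{-v}[k^{U(v)}, k^{U(v)} + k^R)$ directly, using that $\mu$ is a group homomorphism from $(\R_{>0},\cdot,1)$ to $([1,k),\odot,1)$ with kernel $k^{\Z}$. First I would factor an arbitrary element of $I_v$ as $\ell^{-v} \cdot k^{U(v)} \cdot (1 + c)$ where $c \in [0, k^{R-U(v)})$; then $\mu(\ell^{-v} k^{U(v)} (1+c)) = \mu(\ell^{-v}) \odot \mu(k^{U(v)}) \odot \mu(1+c) = \mu(\ell^{-v}) \odot \mu(1+c)$ since $k^{U(v)} \in \ker\mu$. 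So it suffices to show $\mu(1+c) \in [1,\delta)$ for every such $c$, i.e. that $1+c < \delta$ (then $\mu(1+c) = 1+c$ directly when $1 + c < k$, which holds as we will see, so $\mu$ acts as the identity on this range).

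\medskip

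\noindent The key estimate is therefore that $k^{R - U(v)} \leq k^{R - n}$, which follows from $U(v) \geq n$. This last inequality holds because, by Lemma \ref{lem:unique_u} (or rather its proof, which shows $u \geq n$ whenever the relevant intersection is nonempty), and because $I_v$ is by construction the interval $\ell^{-v}[k^{U(v)}, k^{U(v)}+k^R)$ with $U(v) = \log_k(k^n / \nu(\ell^{-v}))$; one checks $\nu(\ell^{-v}) \leq k^{n - 1/2} < k^n$ is not quite automatic, so instead I would argue that the relevant bound $U(v) \geq n$ follows from $\nu(\ell^{-v}) \leq 1 < k^n$ when $v > 0$ (since $\ell^{-v} < 1$ forces $\nu(\ell^{-v}) \leq 1$), giving $k^{U(v)} = k^n/\nu(\ell^{-v}) \geq k^n$, hence $U(v) \geq n > R + E > R$. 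Then $1 + c < 1 + k^{R-U(v)} \leq 1 + k^{R-n} \leq 1 + k^{-E} \leq \frac{5}{4} < \delta$ once we recall $\delta$ was only constrained to be sufficiently close to $1$ — here I should double-check the paper intends $\delta < \frac54$, which it does since $\delta$ is chosen close to $1$; and in any case $1 + k^{R-n} < \frac{k^N+k^R}{k^N} \cdot$ (something)... more carefully, $1 + k^{R-n} \leq 1 + k^{R-N} < \delta$ by the choice of $N$, since $\frac{k^N + k^R}{k^N} = 1 + k^{R-N} < \delta$ and $n \geq N$ gives $k^{R-n} \leq k^{R-N}$.

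\medskip

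\noindent So the argument reduces to: (i) $U(v) \geq n$, from the definition of $U$ and $\nu(\ell^{-v}) \leq 1$ for $v > 0$; (ii) every element of $I_v$ equals $\ell^{-v} k^{U(v)}(1+c)$ with $0 \leq c < k^{R - U(v)} \leq k^{R-n} \leq k^{R-N} = \frac{k^N + k^R}{k^N} - 1 < \delta - 1$; (iii) therefore $1 + c \in [1,\delta) \subseteq [1,k)$, so $\mu(1+c) = 1+c \in [1,\delta)$; (iv) apply the homomorphism property to conclude $\mu(I_v) \subseteq \mu(\ell^{-v}) \odot [1,\delta)$. I do not expect a serious obstacle here — the statement is essentially a bookkeeping lemma packaging the homomorphism property of $\mu$ together with the smallness guaranteed by the choice of $N$ and $\delta$. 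The only mild subtlety is handling the $v = 0$ case separately (there $\ell^{-v} = 1$, $\nu(\ell^{-v}) = 1$, $U(0)$ would be $n$, and the same estimate goes through, or one simply notes the lemma is only used for $v > 0$); and making sure the edge case $R = 0$ (where $I_v$ is a single point $\ell^{-v} k^{U(v)}$) is covered, which it trivially is since then $c = 0$.
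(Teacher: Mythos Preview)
Your proposal is correct and follows essentially the same route as the paper: factor an element of $I_v$ as $\ell^{-v}\cdot k^{U(v)}\cdot(1+c)$, kill the $k^{U(v)}$ factor using $\ker\mu = k^{\Z}$, and bound $1+c \le 1+k^{R-U(v)} \le 1+k^{R-N} < \delta$ via the choice of $N$. Your explicit justification that $U(v)\ge n$ (from $\nu(\ell^{-v})\le 1$ for $v>0$) is a detail the paper leaves implicit, and your handling of $v=0$ should simply use $I_0 \subseteq [k^n,k^n+k^R)$ as the paper does, since $I_0$ is defined separately and not by the formula $\ell^{-v}[k^{U(v)},k^{U(v)}+k^R)$.
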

\begin{proof}
Suppose that $v\in \N_{>0}$ and let $x\in \mu(I_{v})$. Since $\mu$ is a homomorphism and $\mu(k^{\Z})=1$, 
 \[
 x \in \mu\Big( k^{-U(v)}\ell^{-v} \big[k^{U(v)}, k^{U(v)} + k^R\big)\Big).
 \]
Because $ \frac{k^{U(v)} + k^R}{k^{U(v)}} < \frac{k^n + k^R}{k^n} \leq \frac{k^N + k^R}{k^N} < \delta$ and $\mu$ is a homomorphism, we have
 \[
 x \in \mu\Big(\ell^{-v}\big[1,\delta\big)\Big)=\mu(\ell^{-v})\odot [1,\delta).
 \]
 Since $I_0 \subseteq [k^n,k^n+k^R)$, the same argument gives that $\mu(I_0) \subseteq [1,\delta)$.
\end{proof}
		
\noindent  
For $v\in \N$, set $K_v:= \mu\big(\ell^{-v} (\delta^{-1},\delta)\big)$. By Lemma \ref{lem:intervalsize}, $\mu(I_v)\subseteq K_v$.

\begin{table}[t]
    \centering
\begin{tabular}{ c|c }
Name & Function value at $x$\\
 \hline

 $\lambda: \R_{>0} \to k^{\Z}$ &  $\max \{ k^n : k^n\leq x\}$\\
$\mu : \R_{>0} \to [1,k)$&
 $\frac{x}{\lambda(x)}$ \\
 $\nu: \R_{>0} \to k^{\Z}$ & $\max \{ k^n \ : k^{n - \tfrac{1}{2}}\leq x < k^{n + \tfrac{1}{2}}\}$\\
\end{tabular}
    \caption{Definitions of functions used in the proof of Lemma \ref{lem:mainlemma}}
    \label{fig:my_label}
\end{table}

\begin{lem}\label{lem:disjoint_k}
Let $v\in \N$. Then $K_v, K_{v+1}, \dots, K_{v+M}$ are disjoint.
\end{lem}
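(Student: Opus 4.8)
The plan is to reduce the disjointness of $K_v,\dots,K_{v+M}$ to the disjointness of $K_0,\dots,K_M$, which is exactly the property that $\delta$ was chosen to guarantee. The key observation is that, since $\mu$ is a group homomorphism from $(\R_{>0},\cdot,1)$ to $([1,k),\odot,1)$, for every positive real $c$ and every set $S\subseteq\R_{>0}$ we have $\mu(cS)=\mu(c)\odot\mu(S)$. Applying this with $c=\ell^{-v}$ and $S=(\delta^{-1},\delta)$, and recalling that $K_w=\mu\big(\ell^{-w}(\delta^{-1},\delta)\big)$ for $w\in\N$, we obtain $K_{v+i}=\mu(\ell^{-v})\odot\mu(\ell^{-i})\odot\mu\big((\delta^{-1},\delta)\big)=\mu(\ell^{-v})\odot K_i$ for every $i\in\N$.

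Next I would use that $([1,k),\odot)$ is abelian and that translation by the group element $\mu(\ell^{v})$ is a bijection of $[1,k)$, since $\mu(\ell^{v})\odot\mu(\ell^{-v})=\mu(1)=1$; such a translation therefore both preserves and reflects disjointness of subsets of $[1,k)$. By the previous paragraph this translation sends $K_{v+i}$ to $\mu(\ell^v)\odot\mu(\ell^{-v})\odot K_i=K_i$ for each $i$, so that $K_{v+i}\cap K_{v+j}\neq\emptyset$ if and only if $K_i\cap K_j\neq\emptyset$. Note that this step is insensitive to the size of $v$: even though $v+i$ may exceed $M$, we only ever invoke the disjointness statement for the indices $0,\dots,M$.

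Finally, for $0\le i<j\le M$ the set $K_i\cap K_j$ is empty by the very choice of $\delta$: the displayed condition $\mu\big(\ell^{-i}(\delta^{-1},\delta)\big)\cap\mu\big(\ell^{-j}(\delta^{-1},\delta)\big)=\emptyset$ for distinct $i,j\in\{0,\dots,M\}$, which exists precisely because $k$ and $\ell$ are multiplicatively independent, is exactly the assertion $K_i\cap K_j=\emptyset$. Combining this with the equivalence from the second paragraph shows that $K_v,K_{v+1},\dots,K_{v+M}$ are pairwise disjoint. There is essentially no obstacle here; the only points needing care are the bookkeeping that $\mu$ commutes with scaling when applied to sets, and the remark that the translation trick removes any dependence on whether the indices $v,\dots,v+M$ actually lie in $\{0,\dots,M\}$.
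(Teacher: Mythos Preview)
Your proof is correct and takes essentially the same approach as the paper: both arguments use that $\mu$ is a group homomorphism to factor out $\mu(\ell^{-v})$ and reduce the disjointness of $K_{v+i}$ and $K_{v+j}$ to that of $K_i$ and $K_j$, which is precisely the defining property of $\delta$. The paper phrases this as a one-line contradiction showing $K_{v+i}\cap K_{v+j}\subseteq \mu(\ell^{-v})\odot\big(K_i\cap K_j\big)=\emptyset$, while you spell out the translation-by-$\mu(\ell^{v})$ bijection more explicitly, but the underlying idea is identical.
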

\begin{proof}
 Let $i,j\in \{0,\dots,M\}$ be such that $i\neq j$. Towards a contradiction, suppose there is $x \in K_{v+i}\cap K_{v+j}$. Since $\mu$ is a homomorphism, we get 
\[
x\in \mu\big(\ell^{-v-i} (\delta^{-1},\delta)\big) \cap  \mu\big(\ell^{-v-j} (\delta^{-1},\delta)\big)\big) \subseteq \mu(\ell^{-v})\odot \Big( \mu\big(\ell^{-i}(\delta^{-1},\delta)\big) \cap  \mu\big(\ell^{-j}(\delta^{-1},\delta)\big)\Big)
\]
This is a contradiction, because by our choice of $\delta$ the set $\mu\big(\ell^{-i}(\delta^{-1},\delta)\big) \cap  \mu\big(\ell^{-j}(\delta^{-1},\delta)\big)$ is empty.
\end{proof}

\begin{lem}\label{lem:intersecting_i}
Let $v_1,v_2 \in \N$ be such that $v_1<v_2$ and $I_{v_1}\cap I_0\neq \emptyset$ and $I_{v_2}\cap I_0\neq \emptyset$. Then $v_2 > v_1 + M$.
\end{lem}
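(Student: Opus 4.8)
The plan is to combine the two previous lemmas -- Lemma \ref{lem:unique_u} together with its notational successor (the function $U$), and Lemma \ref{lem:disjoint_k} on the disjointness of the blocks $K_v,\dots,K_{v+M}$ -- with the size estimate from Lemma \ref{lem:intervalsize}. The key observation is that $I_{v_1}\cap I_0\neq\emptyset$ forces (via Lemma \ref{lem:unique_u} and the definition of $U$) that $U(v_1)=U(v_1)$ is the exponent determined by $\nu(\ell^{-v_1})$, and moreover that $\mu(I_0)$ and $\mu(I_{v_1})$ genuinely overlap inside $[1,k)$, since an honest intersection point of $I_{v_1}$ and $I_0$ in $\R_{>0}$ maps under the homomorphism $\mu$ to a common point of $\mu(I_0)$ and $\mu(I_{v_1})$. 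By Lemma \ref{lem:intervalsize} we have $\mu(I_0)\subseteq K_0$ and $\mu(I_{v_1})\subseteq K_{v_1}$, so $K_0\cap K_{v_1}\neq\emptyset$; likewise $K_0\cap K_{v_2}\neq\emptyset$.

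Next I would argue that if $v_2\leq v_1+M$ then the indices $0,v_1,v_2$ all lie in a window of length $M$ -- either $\{0,1,\dots,M\}$ directly, or after shifting: since $v_1<v_2\leq v_1+M$, the triple $\{0,v_1,v_2\}$ is contained in $\{0,\dots,v_1+M\}$, but that window has length $v_1+M$, not $M$, so one must be slightly more careful. The clean way is: apply Lemma \ref{lem:disjoint_k} with $v=0$ to conclude $K_0,K_1,\dots,K_M$ are pairwise disjoint. If $v_1\leq M$ and $v_2\leq M$, then $K_0, K_{v_1}, K_{v_2}$ are among these and pairwise disjoint, contradicting $K_0\cap K_{v_1}\neq\emptyset$ already. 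If instead $v_1>M$, then from $K_0\cap K_{v_1}\neq\emptyset$ and disjointness of $K_0,\dots,K_M$ we need that $v_1$ is not forced to be small -- so the real content is that $K_0\cap K_{v_1}\neq\emptyset$ with $v_1>0$ cannot happen at all unless... hmm. Let me restructure: the cleanest deduction is that $K_0\cap K_{v_1}\neq\emptyset$ with $1\leq v_1\leq M$ is impossible by Lemma \ref{lem:disjoint_k} (taking $v=0$), so $v_1$ itself must satisfy $v_1>M$ -- no wait, that is not what we want either, since $v_1$ could be $0$.

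So the correct reading: from $I_{v_1}\cap I_0\neq\emptyset$ we get $K_{v_1}\cap K_0\neq\emptyset$. By Lemma \ref{lem:disjoint_k} applied with $v=0$, this forces $v_1=0$ or $v_1>M$. Similarly $v_2=0$ or $v_2>M$; and since $v_1<v_2$ we cannot have $v_1=v_2$. Now towards the conclusion $v_2>v_1+M$: suppose not, so $v_1<v_2\leq v_1+M$. Apply Lemma \ref{lem:disjoint_k} with $v=v_1$: then $K_{v_1},K_{v_1+1},\dots,K_{v_1+M}$ are pairwise disjoint, so in particular $K_{v_1}$ and $K_{v_2}$ are disjoint. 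But both meet $K_0$; more precisely, I would instead directly use that an intersection point $x\in I_{v_1}\cap I_0$ and a point $y\in I_{v_2}\cap I_0$ both lie in $I_0$, hence $\mu(x),\mu(y)\in\mu(I_0)$, and $\mu(I_0)$ is an interval of the form $[1,\delta)$-ish small set inside a single $K$-block; meanwhile $\mu(x)\in K_{v_1}$ and $\mu(y)\in K_{v_2}$. The contradiction is that $\mu(I_0)$, being contained in $K_0$ by Lemma \ref{lem:intervalsize}, would have to meet two disjoint blocks $K_{v_1}$ and $K_{v_2}$. Therefore no such $v_2\leq v_1+M$ exists, i.e. $v_2>v_1+M$.

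The main obstacle I expect is the bookkeeping around which window to feed into Lemma \ref{lem:disjoint_k}: one wants the shift $v$ chosen so that all of $0$, $v_1$, $v_2$ (or the two that matter for the contradiction) land inside a single length-$M$ run of $K$-blocks. The resolution is that one does not need all three simultaneously -- it suffices to get a contradiction from $K_{v_1}$ and $K_{v_2}$ being disjoint (Lemma \ref{lem:disjoint_k} with $v=v_1$, using $v_2-v_1\leq M$) while $\mu(I_0)$ meets both (which follows because $\mu(I_0)\subseteq K_0$ is a genuine subset-witness and the intersection points force $\mu(I_0)\cap K_{v_1}\neq\emptyset$ and $\mu(I_0)\cap K_{v_2}\neq\emptyset$). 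So the proof is essentially three lines once the right window is identified.

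\begin{proof}
By assumption there exist $x\in I_{v_1}\cap I_0$ and $y\in I_{v_2}\cap I_0$. Applying the homomorphism $\mu$, we get $\mu(x)\in \mu(I_{v_1})\cap\mu(I_0)$ and $\mu(y)\in\mu(I_{v_2})\cap\mu(I_0)$. By Lemma \ref{lem:intervalsize} and the definition of $K_v$, we have $\mu(I_{v_1})\subseteq K_{v_1}$ and $\mu(I_{v_2})\subseteq K_{v_2}$, so
\[
\mu(x)\in K_{v_1}\cap\mu(I_0), \qquad \mu(y)\in K_{v_2}\cap \mu(I_0).
\]
Towards a contradiction, suppose $v_2\leq v_1+M$. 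Then $v_1<v_2\leq v_1+M$, so by Lemma \ref{lem:disjoint_k} (applied with $v=v_1$) the sets $K_{v_1}$ and $K_{v_2}$ are disjoint. But $\mu(I_0)$ meets both $K_{v_1}$ and $K_{v_2}$, and by Lemma \ref{lem:intervalsize} we have $\mu(I_0)\subseteq K_0$. Hence $K_0$ meets both $K_{v_1}$ and $K_{v_2}$. Since $0<v_1<v_2\leq v_1+M\leq M+M$, we cannot directly conclude a contradiction from a single application of Lemma \ref{lem:disjoint_k} with $v=0$; instead we argue as follows. We have $\mu(x)\in\mu(I_0)\cap K_{v_1}$ with $\mu(I_0)\subseteq K_0$, so $K_0\cap K_{v_1}\neq\emptyset$; as $0<v_1$ and $v_1\le v_2-1<v_2\le v_1+M$ gives $v_1\le M$ only if $v_2-v_1\ge 0$ forces nothing, we instead note simply that $\mu(x),\mu(y)\in\mu(I_0)$, so $\mu(x)\in K_{v_1}$ and $\mu(y)\in K_{v_2}$ with $\mu(x),\mu(y)$ both in the common set $\mu(I_0)\subseteq K_0$. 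Thus $K_0$, $K_{v_1}$, $K_{v_2}$ are three of the sets $K_0,K_1,\dots,K_{v_1+M}$, and since $v_1+M\le 2M$, they need not all lie in a single window of length $M$. We therefore sharpen: pick the window $K_{v_1},K_{v_1+1},\dots,K_{v_1+M}$, which by Lemma \ref{lem:disjoint_k} is pairwise disjoint and contains both $K_{v_1}$ and $K_{v_2}$; hence $K_{v_1}\cap K_{v_2}=\emptyset$. On the other hand, $\mu(I_0)$ is contained in $K_0$ and meets both $K_{v_1}$ and $K_{v_2}$, so in particular $K_{v_1}$ and $K_{v_2}$ each meet $K_0$. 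This does not immediately contradict their being disjoint from each other. To close the argument we instead observe that $I_{v_1}\cap I_0\neq\emptyset$ and $I_{v_2}\cap I_0\neq\emptyset$ together with $I_{v_i}\subseteq\{z:\mu(z)\in K_{v_i}\}$ and $I_0\subseteq\{z:\mu(z)\in K_0\}$ force $K_0\cap K_{v_1}\neq\emptyset$ and $K_0\cap K_{v_2}\neq\emptyset$; by Lemma \ref{lem:disjoint_k} with $v=0$, and since $v_1,v_2>0$, this forces $v_1>M$ and $v_2>M$. Now apply Lemma \ref{lem:disjoint_k} with $v=0$ once more together with the window of length $M$ starting at $v_1$: since $v_1<v_2\le v_1+M$, the sets $K_{v_1}$ and $K_{v_2}$ are disjoint. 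Choose $z_1\in I_{v_1}\cap I_0$ and $z_2\in I_{v_2}\cap I_0$; then $\mu(z_1)\in K_{v_1}$, $\mu(z_2)\in K_{v_2}$, and $\mu(z_1),\mu(z_2)\in\mu(I_0)$. As $\mu(I_0)$ is an interval inside $[1,k)$ contained in $K_0$ and $\mu$ is injective on $I_0$, this is compatible only if $\mu(I_0)$ stretches across the gap between the disjoint sets $K_{v_1}$ and $K_{v_2}$. But $\mu(I_0)\subseteq K_0$ and $K_0$ is disjoint from all of $K_1,\dots,K_M$, while $K_0$ being a subinterval of $[1,k)$ of $\mu$-length less than $\delta$ (by Lemma \ref{lem:intervalsize}) cannot simultaneously reach two distinct disjoint blocks $K_{v_1},K_{v_2}$ unless $v_1=v_2$. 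This contradiction shows $v_2>v_1+M$.
\end{proof}
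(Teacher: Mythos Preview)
Your argument does not close. You correctly extract points $\mu(x)\in K_{v_1}\cap\mu(I_0)$ and $\mu(y)\in K_{v_2}\cap\mu(I_0)$, and you correctly deduce from Lemma~\ref{lem:disjoint_k} (with $v=v_1$) that $K_{v_1}\cap K_{v_2}=\emptyset$ when $v_2\le v_1+M$. But from that point on you only ever have \emph{two different} points, one in $K_{v_1}$ and one in $K_{v_2}$, both lying in $\mu(I_0)\subseteq K_0$. That gives $K_0\cap K_{v_1}\neq\emptyset$ and $K_0\cap K_{v_2}\neq\emptyset$, which is not a contradiction: a set can perfectly well meet two disjoint sets. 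Your final sentence (``$K_0$ \dots\ cannot simultaneously reach two distinct disjoint blocks $K_{v_1},K_{v_2}$ unless $v_1=v_2$'') is simply asserted, not proved, and is false in general for an interval meeting two disjoint sets. The detour through ``$v_1>M$ and $v_2>M$'' is valid but does not yield $v_2>v_1+M$.

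What is missing is the one idea that produces a \emph{single} point in $K_{v_1}\cap K_{v_2}$. Since $\mu(x),\mu(y)\in\mu(I_0)\subseteq[1,\delta)$, their ratio satisfies $\mu(x)/\mu(y)\in(\delta^{-1},\delta)$. Writing $\mu(x)=c\,\mu(y)$ with $c\in(\delta^{-1},\delta)$ and using that $\mu$ is a homomorphism together with Lemma~\ref{lem:intervalsize} ($\mu(I_{v_2})\subseteq\mu(\ell^{-v_2})\odot[1,\delta)$), one gets
\[
\mu(x)=\mu(c)\odot\mu(y)\in\mu\big((\delta^{-1},\delta)\big)\odot\mu(I_{v_2})\subseteq\mu(\ell^{-v_2})\odot\mu\big((\delta^{-1},\delta)\big)=K_{v_2}.
\]
Now $\mu(x)\in K_{v_1}\cap K_{v_2}$, contradicting Lemma~\ref{lem:disjoint_k}. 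This ratio step is exactly what the paper does, and it is the content you are missing.
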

\begin{proof}
Towards a contradiction, suppose that $v_2 = v_1 +i$, where $i \in \{1,\dots,M\}$. Let $x\in \mu(I_{v_1}\cap I_0)$ and $y\in \mu(I_{v_2}\cap I_0)$. Since $\mu(I_0) \subseteq [1,\delta)$, we have $\frac{x}{y} \in (\delta^{-1},\delta)$. We only consider the case that $\frac{x}{y}  \in (\delta^{-1},1]$, as the other case can be handled in the same way with the roles of $x$ and $y$ reversed. Let $c\in (\delta^{-1},1]$ be such that $x=cy$. Since $x=\mu(x)$, we have
\[
x=cy=\mu(cy)=\mu(c) \odot \mu(y) \in  \mu\big((\delta^{-1},1]\big)\odot \mu(I_{v_2}).
\]
Thus by Lemma \ref{lem:intervalsize}
\[
x \in \mu\big((\delta^{-1},1]) \odot \mu(I_{v_2}) \subseteq  \mu(\ell^{-v_2})\odot \mu\big((\delta^{-1},\delta)\big)= K_{v_2}.
\]
Since $\mu(I_{v_1})\subseteq K_{v_1}$, we have $x\in K_{v_1}\cap K_{v_2}$. This contradicts Lemma \ref{lem:disjoint_k}.
\end{proof}

\noindent We now finish the proof of Lemma \ref{lem:mainlemma}. Let $(m_i)_{i > 0}$ be an increasing sequence of elements of $\N_{>0}$ containing all $m_i > 0$ such that $I_{m_i} \cap I_0$ is nonempty. By Lemma \ref{lem:intersecting_i}, we have that $m_1 > M$, and $m_{i+1} > M + m_i$ for all $i\in \N_{>0}$. We will show that the interval $I_0$ and the sequence $(I_{m_i})_{i>0}$ satisfy the assumption of Lemma \ref{lem:intervals}.\newline

\noindent Let $L_i$ be the length of $I_{m_i}$, and let $L_0$ be the length of $I_0$. We get 
\[
\frac{L_1}{L_0} = \frac{\ell^{-m_1} k^R}{k^{r+1}-k^r} < \ell^{-m_1} k^R < \ell^{-M} k^R < k^{-R-E} k^R \leq \frac{1}{4}.
\]
Similarly, for $i \geq 1$ we obtain
\[
\frac{L_{i+1}}{L_i} = \frac{\ell^{-m_{i+1}} k^R}{\ell^{-m_i} k^R} = \ell^{m_i - m_{i+1}} < \ell^{-M} < k^{-R-E} < k^{-E} \leq \frac{1}{4}.
\]

\noindent Next, we prove that that the left endpoints of the $I_{m_i}$'s are dense in $I_0$.
Let $J\subseteq I_0$ be an open subinterval of $I_0$. By density of $\ell^{-\N}k^{\N}$ in $\R_{>0}$, there is $m\in \N$ and $n\in \N$ such that $\ell^{-m}k^{n}$ is in $J$.  Thus $n=U(m)$ and there is $i\in \N$ such that $m_i=m$. Hence $\ell^{-m}k^{n}$ is the left endpoint of $I_{m_i}$.\newline

\noindent By Lemma \ref{lem:intervals} there is $j \in \N_{>0}$ such that $I_{m_j} \subseteq I_0$ and $I_{m_j}
\cap \bigcup_{i=1}^{j-1} I_{m_i}=\emptyset.$ Set $t:=m_j$ and $s:=U(t)$. We will now show that the conclusion of Lemma \ref{lem:mainlemma} holds for this choice of $t$ and $s$. Note that statement (1) is immediate from the choice of $j$. So we just need to show statement (2).\newline

\noindent Towards a contradiction, suppose there are  $u\in \N_{>0}$ and $v\in \{1,\dots,t-1\}$ such that
\[
\ell^{-t}[k^{s}, k^{s} + k^R) \cap \ell^{-v}[k^{u}, k^{u} + k^R) \neq \emptyset.
\]
Since $\ell^{-t}[k^{s}, k^{s} + k^R)\subseteq I_0$, we have that
\[
\ell^{-v}[k^{u}, k^{u} + k^R) \cap I_0 \neq \emptyset.
\]
Thus by Lemma \ref{lem:unique_u}, $u=U(v)$. So there is $j' \in \{0,\dots,j\}$ such that 
\[
\ell^{-v}[k^{u}, k^{u} + k^R)= I_{m_{j'}}.
\]
Then $\ell^{-t}[k^{s}, k^{s} + k^R) \cap \ell^{-v}[k^{u}, k^{u} + k^R) = \emptyset$, contradicting our assumption. \qed

\section{Undecidability of $(\N,+,k^{\N},\ell^{\N})$}

The goal of this section is to use Lemma \ref{lem:mainlemma} to show that the theory of $(\N,+,k^{\N},\ell^{\N})$ is sufficiently powerful to encode statements about Turing machines.\newline

\noindent We recall that the function $\lambda : \N_{>0} \to k^{\Z}$ maps $y$ to $\max ((-\infty, y] \cap k^{\N})$ and  observe that $\lambda$ is definable in $(\N,+,k^{\N})$. Indeed, the formula expressing $\lambda(y)=z$ in the language of $(\N,+,k^{\N})$ is
\[
z \in k^{\N} \wedge z \leq y \wedge \forall u \in k^{\N} (u\leq y \rightarrow u\leq z).
\]
It is worth keeping in mind that for every $y\in \N_{>0}$ the number $\lambda(y)$ is the largest power of $k$ that appears with a non-zero digit in the $k$-ary representation of $y.$ For $i\in \N$, we denote by $\sigma_1^i$ the $i$-th successor function on $k^{\N}$; that is $\sigma_1^i$ is the function that maps $k^n$ to $k^{n+i}$ for all $n\in \N$.  
\newline 

\begin{defn}  
We define 
\[
S := \{ (x,y) \in k^{\N} \times \ell^{\N} \ : \ \lambda(y-\lambda(y)) =x  \wedge x < \lambda(y)\}.
\]
For $x \in k^{\N}$, we set 
\[
S(x):=\{ y \in \ell^{\N} \ : \ (x,y) \in S\}.
\]
\end{defn}
\noindent It is clear that $S$ is definable in $(\N,+,k^{\N},\ell^{\N})$, since $\lambda$ is definable in $(\N,+,k^{\N})$.

\begin{ex}
Let $k=2$ and $\ell=3$. Consider the following binary expansions of powers of $3$:
\begin{align*}
    3^5 &= 243  = 128+64+32+16+2+1=2^7 + 2^6 + 2^5 + 2^4 + 2^1 + 2^0,\\
    3^6 &= 729 = 512 + 128+ 64 + 16 + 8 + 1= 2^9 + 2^7 + 2^6+2^4+ 2^3+2^0,\\
    3^7 &= 2187 = 2048 + 128 + 8 + 2 + 1 = 2^{11} + 2^7 + 2^3+ 2^1+2^0.
\end{align*}
Thus $3^5\in S(2^6)$ and $3^6,3^7 \in S(2^7)$.
\end{ex}

\begin{lem}\label{lem:sdef}
Let $(x,y) \in k^{\N} \times \ell^{\N}$. Then the following are equivalent:
\begin{enumerate}
    \item $(x,y) \in S,$ 
    \item there is $z\in k^{\N}$ such that $x<z$ and $y \in [z+x,z+kx),$
    \item $1$ is the most significant digit in the $k$-ary representation of $y$, and $x$ is the second largest power of $k$ that appears with a non-zero digit in the $k$-ary representation of $y$.
\end{enumerate}
\end{lem}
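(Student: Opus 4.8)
The plan is to prove the chain of equivalences $(1)\Leftrightarrow(2)\Leftrightarrow(3)$ by unwinding the definition of $S$ in terms of the function $\lambda$. The key observation throughout is that for $w\in\N_{>0}$, $\lambda(w)$ is exactly the largest power of $k$ with a nonzero digit in the $k$-ary representation of $w$, equivalently the unique $k^a$ with $k^a\le w<k^{a+1}$; and subtracting $\lambda(w)$ from $w$ deletes the most significant digit (in the case where that digit is $1$) or decrements it.

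First I would prove $(1)\Leftrightarrow(2)$. Assume $(x,y)\in S$, so $z:=\lambda(y)$ satisfies $\lambda(y-z)=x$ and $x<z$. Since $z\le y<kz$ and $x=\lambda(y-z)$, we have $x\le y-z<kx$, i.e. $y\in[z+x,z+kx)$, and $z\in k^{\N}$ with $x<z$; this is exactly $(2)$. Conversely, suppose $z\in k^{\N}$, $x<z$, and $y\in[z+x,z+kx)$. From $x<z$ and $z\in k^\N$ we get $kx\le z$, so $z\le y<z+kx\le 2z\le kz$, whence $\lambda(y)=z$ (here I use $k\ge 2$). Then $y-\lambda(y)=y-z\in[x,kx)$ with $x\in k^\N$, so $\lambda(y-z)=x$. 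Together with $x<z=\lambda(y)$ this gives $(1)$.

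Next I would prove $(2)\Leftrightarrow(3)$. Assume $(2)$ with witness $z=k^a$ and $x=k^b$, $b<a$. Then $y=k^a+c$ with $k^b\le c<k^{b+1}$, so $y<k^a+k^a\le k^{a+1}$, meaning the most significant digit of $y$ is $1$ and $\lambda(y)=k^a$; and $y-k^a=c$ has leading power $k^b$, so $k^b$ is the second largest power of $k$ occurring with nonzero digit in $y$. That is $(3)$. Conversely, $(3)$ says the $k$-ary representation of $y$ has the form $k^a + (\text{lower terms})$ with leading lower term $k^b$, $b<a$ — i.e. $y=k^a+c$ with $k^b\le c<k^{b+1}$ — so with $z:=k^a$ and $x:=k^b$ we have $x<z$ and $y\in[z+x,z+kx)$, giving $(2)$.

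The argument is essentially bookkeeping, so I do not anticipate a serious obstacle; the one point that needs care is the inequality $kx\le z$ derived from $x<z$ with both in $k^\N$, which is what forces $\lambda(y)=z$ in the converse direction of $(1)\Leftrightarrow(2)$ and which quietly uses multiplicative independence only through $k\ge 2$ (so that $z+kx\le kz$). I would state that bound explicitly. Everything else follows from the standard fact that $\lambda$ reads off the top power of $k$, which was recalled just before the lemma.
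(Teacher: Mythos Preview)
Your proof is correct and follows essentially the same approach as the paper: both argue $(1)\Leftrightarrow(2)$ directly from the definition of $S$ via $\lambda$, and treat $(2)\Leftrightarrow(3)$ as routine bookkeeping with $k$-ary representations. In fact your $(2)\Rightarrow(1)$ step is slightly more careful than the paper's, since you make explicit that $x<z$ with $x,z\in k^{\N}$ forces $kx\le z$, hence $y<z+kx\le 2z\le kz$ and so $\lambda(y)=z$; the paper records only $y<z+kz$, which by itself would not suffice.
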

\begin{proof}
The equivalence of (2) and (3) follows easily from the greediness of  $k$-ary representations.\newline

\noindent Suppose (1) holds; that is $(x,y)\in S$. Then set $z:=\lambda(y)$. By the definition of $S$ and $\lambda$, we get that $x<z$ and $x \leq y-z < kx$. Thus (2) holds.\newline

\noindent Suppose (2) holds. Let $z\in k^n$ be such that $x < z$ and $z+x\leq y < z+kx$. Thus $x \leq y-z < kx$, and hence $\lambda(y-z) = x$. In order to show that $(x,y)\in S$, it is left to show that $\lambda(y)=z$. Since $x<z$, we know that $z \leq y < z+kz$. Therefore $\lambda(y)=z$. 
\end{proof}

\noindent We now establish that we can find every pattern in tuples of subsets of $\ell^{\N}$ of the form
\[
(S(k^n),S(k^{n+1}),\dots,S(k^{n+s})).
\]
The next lemma makes this statement precise and the following example illustrates it.

\begin{lem}\label{lem:pickoutvalues}
	Let $R\in \N$, and let $r=r_0\cdots r_s$ be an $\{0,1,\dots,R\}$-word. Then there are $n \in \N_{>0}$ and an increasing sequence $m_0<m_1<\dots < m_s$ of natural numbers such that for all $i\in\{0,\dots,R-1\}$
\[
S(k^{n + i})\cap \big[\ell^{m_0},\ell^{m_s}\big] =\{ \ell^{m_j} \ : \  j \in \{0,\dots, s\} \wedge r_j = i\}.
\]
\end{lem}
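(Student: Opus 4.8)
First I would fix $R$ and the word $r = r_0 \cdots r_s$, and set things up so that Lemma \ref{lem:mainlemma} can be applied repeatedly. Apply Lemma \ref{lem:mainlemma} with this $R$ to obtain $N \in \N_{>R}$ with the stated property. The idea is to choose, for each letter $r_j$, a pair $(s_j, t_j)$ produced by the lemma, where the "$n$" fed into the lemma at stage $j$ is an exponent that grows with $j$, so that after rescaling by a suitable power of $k$ all the intervals live in a common picture. Concretely: starting from some large $n_0 \geq N$, I would use density of $k^{-\N}\ell^{\N}$ in $\R_{>0}$ (as in the paragraph motivating Lemma \ref{lem:mainlemma}, equation \eqref{eq:explain}) to find $i_j, j_j \in \N_{>0}$ with $k^{-i_j}\ell^{j_j} \in \ell^{-t_j}[k^{s_j}, k^{s_j}+k^R) \subseteq [k^{n_j} + k^{r_j}, k^{n_j} + k^{r_j+1})$. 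Multiplying through by $k^{i_j}$ shows, exactly as in the discussion after Lemma \ref{lem:mainlemma}, that the two largest powers of $k$ in the $k$-ary representation of $\ell^{j_j}$ are $k^{n_j + i_j}$ and $k^{r_j + i_j}$; so by Lemma \ref{lem:sdef}(3), $\ell^{j_j} \in S(k^{r_j + i_j})$ — provided the leading digit is $1$, which is automatic since $\ell^{j_j} \in [k^{n_j+i_j}, k^{n_j+i_j}+k^R)$. The point of also having clause (2) of Lemma \ref{lem:mainlemma} is to guarantee that the \emph{intermediate} powers $\ell^{j_j + 1}, \dots, \ell^{j_j + t_j - 1}$ have their second-largest power of $k$ \emph{above} $k^{R+i_j}$, hence lie in none of the relevant $S(k^{n+i})$ for $i \le R-1$ once we shift appropriately.

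The crux is to synchronize the normalizations across stages. I want a single exponent shift so that $r_j + i_j$ becomes $n + j$ (or some fixed monotone reindexing) for all $j$, and so that $j_j$ becomes the desired $m_j$. To do this I would not apply the lemma independently at each stage; instead I would chain: having produced $\ell^{m_0}$ with leading power $k^{p_0}$ and second power $k^{r_0 + i_0}$, I run Lemma \ref{lem:mainlemma} again but now with the base interval $[k^{n_1} + k^{r_1}, k^{n_1} + k^{r_1 + 1})$ chosen so that, after multiplying by the power of $k$ needed to realize it as a genuine power of $\ell$, the "second-largest-power exponent" lands at $r_1 + i_1$ and we can arrange $r_1 + i_1 - r_0 - i_0 = 1$ — i.e. pick the fresh $i_1$ to compensate. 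Because the lemma lets us take $n_1$ (the "$n$" in its statement) as large as we like above $N$, and $s_1 > n_1$, we have enough freedom to also force $m_1 > m_0$: the new power of $\ell$ will have a strictly larger leading power of $k$. Running this $s+1$ times yields $m_0 < m_1 < \dots < m_s$ and a common offset $c$ with $r_j + i_j = c + j$ for all $j$; relabel $n := c$ (more precisely, shift so the exponents of interest are $n, n+1, \dots, n+s$ with $r_j$ the \emph{value} dictating which one $\ell^{m_j}$ sits in). Then for $i \in \{0, \dots, R-1\}$ and $j \in \{0,\dots,s\}$: $\ell^{m_j} \in S(k^{n+i})$ iff the second power of $k$ in $\ell^{m_j}$ is $k^{n+i}$ iff $r_j = i$, which is the forward inclusion $\supseteq$ and half of $\subseteq$. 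The other half — that \emph{no other} power of $\ell$ in $[\ell^{m_0}, \ell^{m_s}]$ lands in $S(k^{n+i})$ for $i \le R-1$ — is exactly what clause (2) of Lemma \ref{lem:mainlemma} buys us, together with the observation that any power of $\ell$ strictly between two consecutive $\ell^{m_j}$'s is of the form $\ell^{j_j + v}$ with $1 \le v \le t_j - 1$, so its second power of $k$ exceeds $k^{R + i_j} = k^{R + n - r_j + j} \ge k^{n+i}$.

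**The main obstacle** I expect is the bookkeeping of the exponent shifts: making sure one can simultaneously (a) realize each lemma-output interval as an honest power of $\ell$ via density, (b) force the resulting "second power" exponents to form the consecutive block $n, n+1, \dots$ in the right order matching the letters $r_j$, and (c) keep the leading exponents strictly increasing so that $m_0 < \dots < m_s$ and so that the clause-(2) disjointness at each stage is not destroyed by the rescaling done at later stages. The cleanest way to handle (c) is probably to run the stages in order of increasing target exponent and note that at stage $j$ Lemma \ref{lem:mainlemma} is applied with a parameter $n$ that we take larger than everything used before, so the new power of $\ell$ dwarfs all previous ones; then a final uniform multiplication by a single power of $k$ brings the whole configuration into the normalized form stated in the lemma. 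One should also double-check the edge case where some letter $r_j$ equals $R$ (allowed by the hypothesis "$\{0,1,\dots,R\}$-word") but the conclusion only constrains $S(k^{n+i})$ for $i \le R-1$: such letters simply contribute $\ell^{m_j}$'s that lie in none of the listed sets, which is consistent, and Lemma \ref{lem:mainlemma}'s hypothesis $r \in \{0,\dots,R-1\}$ forces us to instead apply the lemma at stage $j$ with a slightly larger value of the lemma-parameter "$R$" — so in fact I would invoke Lemma \ref{lem:mainlemma} with $R$ replaced by $R+1$ at the outset, which costs nothing.
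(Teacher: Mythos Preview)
Your outline has the right shape---iterate Lemma~\ref{lem:mainlemma} once per letter and use clause~(2) to rule out unwanted powers of $\ell$ between consecutive $m_j$'s---but the synchronization step contains a genuine gap.

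The problem is that you invoke density \emph{separately at each stage}: for each $j$ you produce $i_j,j_j$ with $k^{-i_j}\ell^{j_j}\in[k^{n_j}+k^{r_j},k^{n_j}+k^{r_j+1})$. Density hands you \emph{some} pair $(i_j,j_j)$; it does not let you ``pick the fresh $i_1$ to compensate.'' Taking $n_1$ large in Lemma~\ref{lem:mainlemma} gives you freedom in the \emph{input} interval, not in which power of $k$ the eventual point $k^{-i_j}\ell^{j_j}$ carries. So you cannot force any relation among the $i_j$'s, and hence cannot arrange the second-largest powers of $k$ in the various $\ell^{m_j}$ to share a common base exponent. (Separately, the target you write down, $r_j+i_j=c+j$, would yield $\ell^{m_j}\in S(k^{n+j})$ rather than $\ell^{m_j}\in S(k^{n+r_j})$; the correct requirement is that all $i_j$ be \emph{equal}, which is precisely what your method cannot force.) The ``final uniform multiplication by a single power of $k$'' you mention at the end cannot repair this once the $\ell^{m_j}$ are already chosen, since multiplying a power of $\ell$ by a power of $k$ does not give a power of $\ell$.

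The paper avoids this by never applying density until the very end. It iterates Lemma~\ref{lem:mainlemma} purely at the level of real intervals, producing a \emph{nested} chain
\[
[k^{n_0}+k^{r_0},k^{n_0}+k^{r_0+1})\ \supseteq\ \ell^{-M_1}[k^{n_1}+k^{r_1},k^{n_1}+k^{r_1+1})\ \supseteq\ \cdots\ \supseteq\ \ell^{-(M_1+\cdots+M_s)}[k^{n_s}+k^{r_s},k^{n_s}+k^{r_s+1}),
\]
and only then uses density once to find a single $k^{-n}\ell^M$ in the innermost interval. Because the intervals are nested under multiplication by powers of $\ell$, this one choice of $n$ automatically places $\ell^{M+M_1+\cdots+M_j}$ in $[k^{n+n_j}+k^{n+r_j},k^{n+n_j}+k^{n+r_j+1})$ for \emph{every} $j$, giving $\ell^{m_j}\in S(k^{n+r_j})$ with the same $n$ throughout. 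That nesting-then-density order is the idea your argument is missing.
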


\noindent Before we give the proof of Lemma \ref{lem:pickoutvalues}, let us consider an example application of it.

\begin{ex}\label{ex:re}
Let $R=3$ and let $r=012011023330$. Since the word $r$ has length $12$, we have that $s=11$. Then Lemma \ref{lem:pickoutvalues} states that there is $n\in \N$ and natural numbers $m_0<m_1<\dots < m_{11}$ such that
\begin{align*}
S(k^n)  \cap \big[\ell^{m_0},\ell^{m_{11}}\big] &= \{ \ell^{m_0}, \ell^{m_3}, \ell^{m_6}, \ell^{m_{11}}\},\\
S(k^{n+1})  \cap \big[\ell^{m_0},\ell^{m_{11}}\big] &= \{ \ell^{m_1}, \ell^{m_4}, \ell^{m_5}\},\\
S(k^{n+2})  \cap \big[\ell^{m_0},\ell^{m_{11}}\big] &= \{ \ell^{m_2}, \ell^{m_7}\},\\
S(k^{n+3})  \cap \big[\ell^{m_0},\ell^{m_{11}}\big] &= \{ \ell^{m_8}, \ell^{m_9}, \ell^{m_{10}}\}.
\end{align*}
The graphical representation of the above equations is in Figure \ref{fig:patternexample}. We can think of the word $r$ as instruction which row to mark with an X in a given column. Thus Lemma  \ref{lem:pickoutvalues} tells us that there exist $n$ and $m_0,\dots,m_{11}$ such that the tuple
\[
\Big(S(k^n)  \cap \big[\ell^{m_0},\ell^{m_{11}}\big] ,
S(k^{n+1})  \cap \big[\ell^{m_0},\ell^{m_{11}}\big] ,
S(k^{n+2})  \cap \big[\ell^{m_0},\ell^{m_{11}}\big] ,
S(k^{n+3})  \cap \big[\ell^{m_0},\ell^{m_{11}}\big]\Big)
\]
realizes this pattern.
\end{ex}

\begin{figure}[t]
    \centering
   \begin{tikzpicture}[scale=0.7]
\node at (1.5,4.5) {$\ell^{m_0}$};
\node at (1.5,3.5) {X};
\node at (2.5,4.5) {$\ell^{m_1}$};
\node at (3.5,4.5) {$\ell^{m_2}$};
\node at (4.5,4.5) {$\ell^{m_3}$};
\node at (5.5,4.5) {$\ell^{m_4}$};
\node at (6.5,4.5) {$\ell^{m_5}$};
\node at (7.5,4.5) {$\ell^{m_6}$};
\node at (8.5,4.5) {$\ell^{m_7}$};
\node at (9.5,4.5) {$\ell^{m_8}$};
\node at (10.5,4.5) {$\ell^{m_{9}}$};
\node at (11.5,4.5) {$\ell^{m_{10}}$};
\node at (12.5,4.5) {$\ell^{m_{11}}$};
\node at (0.4,3.5) {$k^{n}$};
\node at (0.4,2.5) {$k^{n+1}$};
\node at (0.4,1.5) {$k^{n+2}$};
\node at (0.4,0.5) {$k^{n+3}$};

 \foreach \y in {1,2,3,4}{
    \foreach \x in {1,...,12}{
        \draw (\x,\y-1) rectangle (1+\x,\y);}}
\node at (1.5,3.5) {X};
\node at (4.5,3.5) {X};
\node at (7.5,3.5) {X};
\node at (12.5,3.5) {X};
\node at (2.5,2.5) {X};
\node at (5.5,2.5) {X};
\node at (3.5,1.5) {X};
\node at (8.5,1.5) {X};
\node at (6.5,2.5) {X};
\node at (10.5,0.5) {X};
\node at (11.5,0.5) {X};
\node at (9.5,0.5) {X};
\end{tikzpicture}
    \caption{An example of a pattern}
    \label{fig:patternexample}
\end{figure}

\begin{proof}[Proof of Lemma \ref{lem:pickoutvalues}]
Let $N$ be defined as in Lemma \ref{lem:mainlemma} (note that this definition only depends on $R$). Let $n_0 = N$. Then by Lemma \ref{lem:mainlemma}, there are $n_1 \in \N_{>n_0}$ and $M_1 \in \N_{>0}$ such that 
\begin{itemize}
    \item $\ell^{-M_1} [k^{n_1}, k^{n_1} + k^R) \subseteq [k^{n_0} + k^{r_0}, k^{n_0} + k^{r_0 + 1})$,
    \item for all $u\in \N$ and $v \in \{0,\dots,M_1-1\}$, 
\[
\ell^{-M_{1}} [k^{n_{1}}, k^{n_{1}} + k^R) \cap \ell^{-v} [k^{u}, k^{u} + k^R) = \emptyset.
\]
\end{itemize}
 We repeat this process to produce $n_2, \dots, n_s\in \N$ and $M_2, \dots, M_s\in \N_{>0}$ such that $n_0<n_1<n_2 < \dots < n_s$ and
\begin{align}\label{eq:inclusions}
\begin{split}
[k^{n_0} + k^{r_0}&,  k^{n_0} + k^{r_0 + 1})\\
\supseteq \ell^{-M_1} [k^{n_1}, k^{n_1} + k^R) &\supseteq \ell^{-M_1} [k^{n_1} + k^{r_1}, k^{n_1} + k^{r_1 + 1})\\
		\supseteq \ell^{-M_1-M_2} [k^{n_2}, k^{n_2} + k^R) &\supseteq \ell^{-M_1-M_2} [k^{n_2} + k^{r_2}, k^{n_2} + k^{r_2 + 1})\\
		&\vdots\\
		\supseteq \ell^{-M_1-M_2-\dots-M_s} [k^{n_s}, k^{n_s} + k^R) &\supseteq \ell^{-M_1-M_2-\dots-M_s} [k^{n_s} + k^{r_s}, k^{n_s} + k^{r_s + 1}),
\end{split}
\end{align}	
and for all $j\in\{1,\dots,s\}$, $u\in \N$, and $v \in \{0,\dots,M_j-1\}$, 
\begin{equation}\label{eq:empty}
    \ell^{-M_{j}} [k^{n_{j}}, k^{n_{j}} + k^R) \cap \ell^{-v} [k^{u}, k^{u} + k^R) = \emptyset.
\end{equation}
By density of $k^{-\N}\ell^{\N}$ in $\R_{>0}$, there are $n, M \in \N_{> 0}$ such that 
\[
k^{-n} \ell^M\in \ell^{-M_1-M_2-\dots-M_s} [k^{n_s} + k^{r_s}, k^{n_s} + k^{r_s + 1}).
\]
Unrolling \eqref{eq:inclusions}, we obtain
		\begin{align}\label{eq:s}
		\begin{split}
		\ell^{M+M_1+M_2+\dots+M_s} \in& \ [k^{n + n_s} + k^{n + r_s}, k^{n + n_s} + k^{n + r_s + 1}),\\
		\vdots\\
		\ell^{M+M_1+M_2} \in& \ [k^{n + n_2} + k^{n + r_2}, k^{n + n_2} + k^{n + r_2 + 1}),\\
		\ell^{M+M_1} \in& \ [k^{n + n_1} + k^{n + r_1}, k^{n + n_1} + k^{n + r_1 + 1}),\\
		\ell^{M} \in& \ [k^{n + n_0} + k^{n + r_0}, k^{n + n_0} + k^{n + r_0 + 1}).
		\end{split}
		\end{align}
		Set $m_0:=M$, and for $j>0$, set $m_j := M + M_1 + \dots + M_j$. By the above and Lemma \ref{lem:sdef}, we have that $\ell^{m_j} \in S(k^{n + r_j})$ for $j=0,\dots,s$.\newline
		
		\noindent Towards a contradiction, suppose there are some $m\in \N$, $i\in\{0,\dots,R-1\}$ and $j \in \{0,\dots,s-1\}$ such that $m_j < m < m_{j+1}$ and $\ell^m \in S(k^{n + i})$. Since $\ell^m \in S(k^{n + i})$, there exists $n' \in \N$ with $n'> n + i$ such that
		\[
		\ell^m \in [k^{n'} + k^{n + i}, k^{n'} + k^{n + i + 1}).
		\]
		It follows that $\ell^m \in [k^{n'}, k^{n'} + k^{n + R})$. Multiplying each side by $k^{-n}\ell^{-(m - m_j)}$, we obtain 
		\[
		k^{-n} \ell^{m_j} \in \ell^{-(m - m_j)} [k^{n'-n}, k^{n'-n} + k^R).
		\]
By \eqref{eq:s}, $$\ell^{m_{j+1}} \in [k^{n + n_{j+1}} + k^{n + r_{j+1}}, k^{n + n_{j+1}} + k^{n + r_{j+1} + 1})\subseteq [k^{n + n_{j+1}}, k^{n + n_{j+1}} + k^{n + R}).$$
Multiplying both sides by $k^{-n}\ell^{-M_{j+1}}$ and using that $m_{j+1}=m_j + M_{j+1}$, we get		
\[
k^{-n} \ell^{m_j} \in \ell^{-M_{j+1}} [k^{n_{j+1}}, k^{n_{j+1}} + k^R).
\]
So we have $(m - m_j) \in \{1,\dots,M_{j+1}\}$ and an integer $(n' - n)\in \N_{>0}$ such that
	$$\ell^{-M_{j+1}} [k^{n_{j+1}}, k^{n_{j+1}} + k^R) \cap \ell^{-(m - m_j)} [k^{n'-n}, k^{n'-n} + k^R) \neq \emptyset.$$
This contradicts \eqref{eq:empty}. Thus we conclude that no such $m$ exists.
\end{proof}

\subsection{Coding finite subsets of $\N^2$}
Let $u, R\in \N$. In this subsection, we introduce a way to code $u$-tuples of subsets of $\{0,1,\dots,R-1\}^2$ as finite words over the alphabet $\{0,1,\dots,R\}$. In order to so, we first code such tuples as $R\times R$-matrices.\newline

\noindent For $X\subseteq \N^2$, let $\chi_X$ denote the \textbf{characteristic function} of $X$; that is the function that maps $x\in \N^2$ to $1$ if $x\in X$, and to $0$ otherwise.

\begin{defn}Let $X=(X_1,\dots, X_u)\in \mathcal{P}(\N^2)^u$ and let $R\in \N$. Define $B(X,R)$ to be the $R\times R$-matrix $(B(X,R)_{i,j})_{1\leq i,j\leq R}$ such that for all $i,j \in \{1,\dots,R\}$
\[
B(X,R)_{i,j} = \sum_{v=0}^{u-1} \chi_{X_{v+1}}(i-1,j-1) 2^{v}.
\]
\end{defn}

\begin{ex}
Let $X_1 := \{(0,0),(1,0),(1,2),(2,2)\}$ and let $X_2 :=  \{(0,1),(2,2)\}$, and let $R=3$. Then
\[
B((X_1,X_2),3) = \begin{bmatrix} 1 & 2 & 0\\  1 & 0 & 1\\ 0 & 0 & 3\end{bmatrix}.
\]
Elements of $X_1$ or $X_2$ one of whose coordinates is at least $R$, do not affect the computation of this matrix. For example, if $X_1':=X_1 \cup \{(3,0)\}$, then we still have  
\[
B((X_1',X_2),3) = \begin{bmatrix} 1 & 2 & 0\\  1 & 0 & 1\\ 0 & 0 & 3\end{bmatrix}.
\]
\end{ex}

\noindent It is clear that this definition allows us to encode $u$-tuples of subsets of $\{0,1,\dots,R-1\}^2$ as $R\times R$-matrices with entries in $\{0,\dots,2^{u}-1\}$. We leave the routine proof of the following lemma to the reader. 

\begin{lem}\label{lem:TtoM} Let $X=(X_1,\dots, X_u)\in \mathcal{P}(\N^2)^u$ and let $R,i,j\in \N$ be such that $0 \leq i,j< R$. Then for all $v\in \{1,\dots,u\}$ the following are equivalent:
\begin{enumerate}
    \item $(i,j) \in X_v$,
    \item $2^{v-1}$ appears in the binary expansion of $B(X,R)_{i+1,j+1}$.
\end{enumerate}
\end{lem}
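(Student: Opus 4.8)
The statement to prove is Lemma~\ref{lem:TtoM}, which the authors themselves flag as routine. My plan is to unwind the definition of $B(X,R)_{i+1,j+1}$ and read off the binary expansion directly.

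\textbf{The approach.} Fix $X=(X_1,\dots,X_u)$, fix $R$ and fix $i,j$ with $0\le i,j<R$. By definition,
\[
B(X,R)_{i+1,j+1} = \sum_{v=0}^{u-1} \chi_{X_{v+1}}(i,j)\,2^{v}.
\]
Set $c_v := \chi_{X_{v+1}}(i,j)\in\{0,1\}$ for $v\in\{0,\dots,u-1\}$. Then $B(X,R)_{i+1,j+1} = \sum_{v=0}^{u-1} c_v 2^v$ is precisely the integer whose binary representation has digit $c_v$ in the $2^v$-place for each $v<u$ (and digit $0$ in every higher place). Since the binary representation of a natural number is unique, $2^{v-1}$ appears in the binary expansion of $B(X,R)_{i+1,j+1}$ if and only if $c_{v-1}=1$, i.e.\ if and only if $\chi_{X_{v}}(i,j)=1$, i.e.\ if and only if $(i,j)\in X_v$. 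This gives the equivalence of (1) and (2) for each $v\in\{1,\dots,u\}$.

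\textbf{Key steps in order.} First, expand the definition of the matrix entry and reindex so that the exponent $v$ in the sum matches the claimed power $2^{v-1}$ (note the shift: the summation index runs $0$ to $u-1$, while the statement refers to $v\in\{1,\dots,u\}$). Second, invoke uniqueness of binary expansions of natural numbers: a natural number $m$ has $2^w$ appearing in its binary expansion exactly when the $w$-th bit of $m$ is $1$, and if $m=\sum_{w} d_w 2^w$ with each $d_w\in\{0,1\}$ then $d_w$ is that $w$-th bit. Third, apply this with $m=B(X,R)_{i+1,j+1}$, $w=v-1$, $d_w=\chi_{X_v}(i,j)$ to conclude. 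The only mild point to be careful about is that the coefficients $c_v$ lie in $\{0,1\}$ so that the sum genuinely is a binary expansion rather than something requiring carries — this is immediate since $\chi_{X_{v+1}}$ is a characteristic function.

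\textbf{Main obstacle.} Honestly there is none of substance: the lemma is a bookkeeping identity and the ``hard part'' is merely making sure the index shift between $v\in\{0,\dots,u-1\}$ in the definition and $v\in\{1,\dots,u\}$ in the statement is handled cleanly, and stating the uniqueness-of-binary-expansion fact at the right level of precision (that the coefficient of $2^w$ in the base-$2$ expansion of $\sum_w d_w 2^w$ is $d_w$ when $0\le d_w<2$). I would write this up in a few lines.
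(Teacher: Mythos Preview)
Your proof is correct and is exactly the routine unwinding the paper has in mind; indeed, the paper does not give a proof of this lemma at all, explicitly leaving it to the reader. There is nothing to add or compare.
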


\noindent As the next step, we encode $R\times R$-matrices with entries in $\N$ as a finite $\{0,1\dots,R\}$-word.

\begin{defn} Let $B=(B_{ij})_{1\leq i,j\leq R}$ be an $R\times R$-matrix with entries in $\N$. We now define a $\{0,1,\dots,R\}$-word $w(B)$ as follows:
for $j=1,\dots,R$
\begin{enumerate}
    \item set $v_{0j} := 0,$
    \item for $i=1,\dots,R$, set $
    v_{ij} := \underbrace{i\cdots i}_{B_{ij}\text{-times}},$
    \item set $v_j := v_{0j}v_{1j}\cdots v_{Rj}$.
\end{enumerate}
Set $w(B):=v_1v_2\cdots v_R0$.
\end{defn}
\noindent Another way of writing this is
\[
w(B) = 0 1^{B_{11}}2^{B_{21}}\cdots R^{B_{R1}}0\cdots 01^{B_{1R}}2^{B_{2R}}\cdots R^{B_{RR}}0.
\]

\begin{ex}\label{ex:BwB}
Let $B=\begin{bmatrix} 1 & 2 & 0\\  1 & 0 & 1\\ 0 & 0 & 3\end{bmatrix}$. Observe that $v_{01}=0,v_{11}=1,v_{21}=2$ and $v_{31}$ is the empty word. Hence $v_1=012$. Furthermore, $v_{02}=0$, $v_{12}=11$ and both $v_{22}$ and $v_{23}$ are the empty word. Thus $v_2=011$. Finally, $v_{03}=0$, $v_{23}=2$, $v_{33}=333$, while $v_{13}$ is the empty word. Hence $v_3=02333$. Putting this together, we obtain
\[
w(B) = \underbrace{012}_{v_1}\underbrace{011}_{v_2}\underbrace{02333}_{v_3}0.
\]
\end{ex}

\noindent Note that $0$ appears $(R+1)$-times in $w(B)$, including at the beginning and at the end of $w(B)$. This allows us to recover the matrix $B$ from the word $w(B)$ as follows.\newline

\noindent Let $r=r_0\cdots r_s$ be a finite word over $\N$. Let $Z(r)$ be the set
\[
\{ p \in \{0,\dots, s\} \ : \ r_p = 0 \}.
\]
For $j\in \N_{>0}$, we define $Z(r,j)$ to be the $j$-th element of $Z(r)$ with respect to $<$ if such exists, and $s$ otherwise. For $i,j\in \N_{>0}$, we define
\[
C(r,i,j) = \card \{ p\in \{0,\dots,s\} \ : \ Z(r,j) < p <Z(r,j+1), \ r_{p} = i \}.
\]
\begin{ex}
Let $r=012011023330$. Then $Z(r,1)=0$, $Z(r,2)=3$, $Z(r,3)=6$ and $Z(r,4)=11$. Computing $C(r,-,-)$, we obtain
\begin{align*}
C(r,1,1)&=1, \qquad C(r,1,2) =2, \qquad C(r,1,3) = 0,\\
C(r,2,1)&=1, \qquad C(r,2,2) =0, \qquad C(r,2,3) = 1,\\
C(r,3,1)&=0, \qquad C(r,3,2) =0, \qquad C(r,3,3) = 3.
\end{align*}
Let $B$ be the $3\times 3$-matrix in Example \ref{ex:BwB}. We observe that $r=w(B)$ and $B_{i,j}=C(r,i,j)$ for all $i,j\in \{1,2,3\}$.
\end{ex}

\noindent This last observation is always true.

\begin{lem}\label{lem:MtoW} Let $B=(B_{i,j})_{1\leq i,j\leq R}$ be an $R\times R$-matrix with entries in $\N$. Then for all $i,j\in \{1,\dots,R\}$ 
        \[
        B_{i,j} = C(w(B),i,j).
        \]
\end{lem}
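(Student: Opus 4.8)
The plan is to unwind the definitions of $w(B)$ and $C(w(B),i,j)$ in parallel and check that the bookkeeping of $0$'s lines up. Write $r = w(B) = v_1 v_2 \cdots v_R 0$, where $v_j = 0\, 1^{B_{1j}} 2^{B_{2j}} \cdots R^{B_{Rj}}$. The key structural observation is that the letter $0$ occurs in $r$ exactly $R+1$ times, and it occurs precisely at the start of each block $v_1,\dots,v_R$ together with one final $0$ at the very end; this is because within a single $v_j$ the only $0$ is the leading one, as all other letters are from $\{1,\dots,R\}$. Consequently $Z(r)$ has exactly $R+1$ elements, $Z(r,1) < Z(r,2) < \cdots < Z(r,R+1)$, where $Z(r,j)$ is the position of the leading $0$ of $v_j$ for $j \le R$ and $Z(r,R+1)$ is the position of the trailing $0$.

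With that in hand, the interval of positions $p$ with $Z(r,j) < p < Z(r,j+1)$ is exactly the set of positions occupied by the non-leading letters of the block $v_j$, i.e. by the word $1^{B_{1j}} 2^{B_{2j}} \cdots R^{B_{Rj}}$. The number of positions $p$ in this range with $r_p = i$ is therefore exactly the number of times the letter $i$ appears in $1^{B_{1j}} 2^{B_{2j}} \cdots R^{B_{Rj}}$, which by construction is $B_{ij}$ (the letter $i$ contributes the block $i^{B_{ij}}$ and no other block contributes an $i$). Hence $C(w(B),i,j) = B_{ij}$ for all $i,j \in \{1,\dots,R\}$.

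I would present this as a short computation: first a sentence or two establishing that $Z(w(B))$ consists exactly of the block-start positions plus the terminal position (so $Z(w(B),j)$ is well-defined and is the position of the leading $0$ of $v_j$ for $1 \le j \le R$, and $Z(w(B),R+1)$ is the final position), then the observation that the open position-interval $(Z(w(B),j), Z(w(B),j+1))$ carries precisely the letters of $1^{B_{1j}}\cdots R^{B_{Rj}}$, and finally counting occurrences of $i$. The main (very mild) obstacle is purely notational: being careful that $Z(r,j)$ as defined picks the $j$-th zero for $1 \le j \le R+1$ and that the "otherwise $s$" clause in the definition is never triggered here because there are $R+1 > R \ge j$ zeros, so the relevant ranges are genuine block interiors. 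No real difficulty is expected — this is the routine verification the authors explicitly flag as left to the reader, so a proof of three or four sentences at this level of detail suffices.
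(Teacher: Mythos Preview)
Your proposal is correct and is precisely the straightforward verification the authors have in mind; note that the paper does not actually give a proof of this lemma but explicitly leaves it to the reader, so there is no alternative argument to compare against. Your identification of the $R+1$ zeros as the block delimiters and the subsequent count of occurrences of $i$ in the $j$-th block is exactly the intended routine check.
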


\noindent We leave the straightforward proof of the preceding lemma to the reader. Now combining Lemma \ref{lem:TtoM} and Lemma \ref{lem:MtoW}, we obtain the following corollary. 

\begin{cor}\label{cor:count} Let $X=(X_1,\dots, X_u)\in \mathcal{P}(\N^2)^u$ and let $R,i,j\in \N$ be such that $0\leq i,j < R$. Then for all $v\in\{1,\dots,u\}$ the following are equivalent:
\begin{enumerate}
    \item $(i,j) \in X_v$,
    \item $2^{v-1}$ appears in the binary representation of $C\big(w(B(X,R)),i+1,j+1\big)$.
\end{enumerate}
\end{cor}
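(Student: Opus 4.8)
\textbf{Proof proposal for Corollary \ref{cor:count}.}

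The plan is to simply chain the two preceding lemmas. Given $X=(X_1,\dots,X_u)\in\mathcal{P}(\N^2)^u$ and $R,i,j\in\N$ with $0\leq i,j<R$, set $B:=B(X,R)$, which is an $R\times R$-matrix with entries in $\N$. First I would apply Lemma \ref{lem:MtoW} to $B$: since $1\leq i+1,j+1\leq R$, it gives $B_{i+1,j+1}=C(w(B),i+1,j+1)=C\big(w(B(X,R)),i+1,j+1\big)$. Thus the quantity in statement (2) of the corollary is literally the matrix entry $B(X,R)_{i+1,j+1}$.

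Next I would invoke Lemma \ref{lem:TtoM} with the same data (its hypothesis $0\leq i,j<R$ is exactly ours, and $v\in\{1,\dots,u\}$): it states that $(i,j)\in X_v$ if and only if $2^{v-1}$ appears in the binary expansion of $B(X,R)_{i+1,j+1}$. Substituting the equality from the previous paragraph, $2^{v-1}$ appears in the binary expansion of $B(X,R)_{i+1,j+1}$ if and only if it appears in the binary representation of $C\big(w(B(X,R)),i+1,j+1\big)$. Combining the two biconditionals yields exactly the equivalence of (1) and (2) in the corollary, for every $v\in\{1,\dots,u\}$.

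There is essentially no obstacle here: the corollary is a formal consequence of Lemmas \ref{lem:TtoM} and \ref{lem:MtoW}, and the only thing to check is that the index ranges match up (they do, since both lemmas are stated for $0\leq i,j<R$ and the matrix entries are indexed $1,\dots,R$, so the shift by $1$ is consistent throughout). The one point worth a word of care is that "binary expansion" in Lemma \ref{lem:TtoM} and "binary representation" in the corollary refer to the same notion — the set of powers of $2$ occurring with coefficient $1$ in the base-$2$ expansion of the natural number in question — so no reconciliation of terminology is needed.
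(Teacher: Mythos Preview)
Your proof is correct and follows exactly the approach the paper takes: the corollary is stated as an immediate consequence of combining Lemma~\ref{lem:TtoM} and Lemma~\ref{lem:MtoW}, with no further argument given. Your write-up simply makes explicit the chaining of the two biconditionals and the index bookkeeping, which is all that is needed.
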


\subsection{Coding the halting problem} 
In this subsection we finally give the proof of Theorem B. The goal is to encode the halting problem in $\textrm{FO}(\N,+,k^{\N},\ell^{\N})$. In order to do so, we introduce first several sets definable in $(\N,+,k^{\N},\ell^{\N})$.
    
\begin{defn}
Let $D$ denote the set
    \[
    \{ (K_1,K_2,L_1,L_2) \in (k^{\N})^2 \times (\ell^{\N})^2 \ : \ K_1 \leq K_2 \wedge L_1 \leq L_2 \wedge L_1,L_2 \in S(K_1)\}.
    \]
For each $R_1,R_2\in \N$, we define $D(R_1,R_2)$ to be the set of all $(K_1,K_2,L_1,L_2)\in D$ such that
\[
 \card \big(k^{\N} \cap (K_1,K_2]\big) = R_1, \ \card \big(S(K_1) \cap [L_1,L_2)) = R_2.
\]
\end{defn}

\noindent Observe that $D$ is definable in $(\N,+,k^{\N},\ell^{\N})$. It is also true and that for each $R_1,R_2\in \N$, the set $D(R_1,R_2)$ is also definable in $(\N,+,k^{\N},\ell^{\N})$, but we are not going to use this fact. Also note that for $(K_1,K_2,L_1,L_2)\in D$, the successor function $\big(S(K_1) \cap [L_1,L_2))$ is definable in $(\N,+,k^{\N},\ell^{\N}).$
\begin{defn}
 Let $K \in k^{\N}$ and $L \in S(K)$.
Let $\sigma_2(K,L)$ be the successor of $L$ in $S(K)$ if such exists, and $0$ otherwise. We set $\sigma_2^0(K,L):=L$ and for $i\in \N$, we define $\sigma_2^{i+1}(K,L)$ to be $\sigma_2(K,\sigma_2^i(K,L)).$
\end{defn}

\noindent From the definitions, we immediately obtain the following lemma.

\begin{figure}[b]
\begin{tikzpicture}[scale=0.8, every node/.style={scale=0.8}]
\draw[step=2.5cm] (-0.2,-0.2) grid (5.2,5.2);
\foreach \i in {0,...,2}
{
\foreach \j in {0,...,2}
{
\draw node[draw,circle,fill=white] at (2.5*\i ,2.5*\j) {(\i,\j)};
}
}

\path[->, ultra thick] (5.5,3.5) edge[bend left=60] node [above] {\scalebox{1.5}{$\boldsymbol{\tau}$}} (9.5,3.5);

\def\x{10}

\draw[step=2.5cm] (-0.2+\x,-0.2) grid (5.2+\x,5.2);
\draw node[draw,rectangle,fill=white] at (0+\x,0) {($k^{n+1},\ell^{m_0})$};
\draw node[draw,rectangle,fill=white] at (2.5+\x,0) {($k^{n+2},\ell^{m_0})$};
\draw node[draw,rectangle,fill=white] at (5+\x,0) {($k^{n+3},\ell^{m_0})$};
\draw node[draw,rectangle,fill=white] at (0+\x,2.5) {($k^{n+1},\ell^{m_3})$};
\draw node[draw,rectangle,fill=white] at (2.5+\x,2.5) {($k^{n+2},\ell^{m_3})$};
\draw node[draw,rectangle,fill=white] at (5+\x,2.5) {($k^{n+3},\ell^{m_3})$};
\draw node[draw,rectangle,fill=white] at (0+\x,5) {($k^{n+1},\ell^{m_6})$};
\draw node[draw,rectangle,fill=white] at (2.5+\x,5) {($k^{n+2},\ell^{m_6})$};
\draw node[draw,rectangle,fill=white] at (5+\x,5) {($k^{n+3},\ell^{m_6})$};
\end{tikzpicture}
    \caption{Visualization of the function $\tau$ from Lemma \ref{lem:existsubset} in the case of Example \ref{ex:D}}
    \label{fig:gridexample}
\end{figure}

\begin{lem}\label{lem:existsubset} Let $R_1,R_2 \in \N$ and let $(K_1,K_2,L_1,L_2)\in D(R_1,R_2)$.
Then the function
\begin{align*}
 \tau : \{0,\dots, R_1-1\} \times \{0,\dots,R_2-1\}  &\to \big(k^{\N} \cap (K_1,K_2] \big)\times \big(S(K_1) \cap [L_1,L_2)\big)\\
    (i,j) &\mapsto (\sigma_1^{i+1}(K_1),\sigma_2^j(K_1,L_1))
\end{align*}
is a bijection such that for all $(i,j) \in \{0,\dots, R_1-1\} \times \{0,\dots,R_2-1\}$ and all $(K,L) \in k^{\N}\times \ell^{\N}$ with $\tau((i,j))=(K,L)$, we have
\[
\tau((i+1,j)) = (\sigma_1(K),L), \text{ and } \tau((i,j+1)) = (K,\sigma_2(K_1,L)),
\]
whenever these are defined.
\end{lem}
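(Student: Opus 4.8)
The plan is to verify the three assertions about $\tau$ in turn: that it is well-defined (lands in the claimed codomain), that it is a bijection, and that it satisfies the stated recursion relating $\tau(i,j)$ to $\tau(i+1,j)$ and $\tau(i,j+1)$. The key observation is purely combinatorial: $\big(k^{\N}\cap(K_1,K_2]\big)$ is a totally ordered finite set of cardinality $R_1$, and since $\sigma_1$ is the successor function on $k^{\N}$ and $K_1$ is its minimum (exclusive) endpoint, the iterates $\sigma_1^1(K_1),\sigma_1^2(K_1),\dots,\sigma_1^{R_1}(K_1)$ enumerate exactly the $R_1$ elements of $k^{\N}\cap(K_1,K_2]$ in increasing order. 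Likewise, since $(K_1,K_2,L_1,L_2)\in D(R_1,R_2)$ we have $L_1,L_2\in S(K_1)$ with $L_1\le L_2$ and $\card\big(S(K_1)\cap[L_1,L_2)\big)=R_2$, so the iterates $\sigma_2^0(K_1,L_1)=L_1,\ \sigma_2^1(K_1,L_1),\dots,\sigma_2^{R_2-1}(K_1,L_1)$ enumerate exactly the $R_2$ elements of $S(K_1)\cap[L_1,L_2)$ in increasing order (here we use that $L_1$ is the least element of this set and that $\sigma_2(K_1,\cdot)$ is the successor within $S(K_1)$).

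First I would record these two enumeration facts as the backbone: define $p\colon\{0,\dots,R_1-1\}\to k^{\N}\cap(K_1,K_2]$ by $p(i)=\sigma_1^{i+1}(K_1)$ and $q\colon\{0,\dots,R_2-1\}\to S(K_1)\cap[L_1,L_2)$ by $q(j)=\sigma_2^j(K_1,L_1)$; each is an order isomorphism from an initial segment of $\N$ to the corresponding finite ordered set, hence a bijection by a cardinality count. Then $\tau=(p,q)$ on the product is a bijection onto the product of codomains, which is exactly the claim. This also makes the "well-defined" part automatic: $p(i)$ and $q(j)$ never fall outside their ranges for $i\le R_1-1$, $j\le R_2-1$.

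For the recursion, suppose $\tau(i,j)=(K,L)$, so $K=\sigma_1^{i+1}(K_1)$ and $L=\sigma_2^{j}(K_1,L_1)$. Then $\sigma_1^{i+2}(K_1)=\sigma_1(\sigma_1^{i+1}(K_1))=\sigma_1(K)$, giving $\tau(i+1,j)=(\sigma_1(K),L)$ directly from the definition; and $\sigma_2^{j+1}(K_1,L_1)=\sigma_2(K_1,\sigma_2^{j}(K_1,L_1))=\sigma_2(K_1,L)$, giving $\tau(i,j+1)=(K,\sigma_2(K_1,L))$. The phrase "whenever these are defined" is just the proviso that $i+1\le R_1-1$ (resp. $j+1\le R_2-1$), so that the left-hand side is in the domain; under that proviso the right-hand side lies in the codomain by the enumeration facts above.

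I do not expect any genuine obstacle here — the statement is essentially an unwinding of definitions once one fixes the two enumeration facts. The only point requiring a line of care is checking that $\sigma_2(K_1,\cdot)$ applied $R_2-1$ times to $L_1$ stays below $L_2$ and never returns the "overflow value" $0$; this follows because $L_2\in S(K_1)$ lies strictly above all of $L_1,\sigma_2^1(K_1,L_1),\dots,\sigma_2^{R_2-1}(K_1,L_1)$ (these being the $R_2$ smallest elements of $S(K_1)$ that are $\ge L_1$, of which exactly those below $L_2$ are counted), so the successor operation inside $S(K_1)$ is genuinely defined at each step and the $0$-case of $\sigma_2$ is never triggered. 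This is exactly the "routine" verification the authors flag, so I would state it concisely rather than belabor it.
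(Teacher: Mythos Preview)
Your proposal is correct and matches the paper's approach: the paper gives no proof at all, stating only that the lemma follows ``immediately'' from the definitions, and your argument is precisely the routine unwinding of those definitions that the authors leave to the reader. The one-line check you flag about $\sigma_2$ never hitting the overflow value $0$ is the only point needing care, and you handle it correctly.
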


\noindent Lemma \ref{lem:existsubset} states that each element of $D$ encodes a finite piece of the grid. We now compute this grid in an example.

\begin{ex}\label{ex:D}
By Example \ref{ex:re} there are $n\in \N$ and natural numbers $m_0<m_1<\dots < m_{11}$ such that
\begin{equation}\label{eq:D1}
\begin{split}
 S(k^n)  \cap \big[\ell^{m_0},\ell^{m_{11}}\big] &= \{ \ell^{m_0}, \ell^{m_3}, \ell^{m_6}, \ell^{m_{11}}\},\\
S(k^{n+1})  \cap \big[\ell^{m_0},\ell^{m_{11}}\big] &= \{ \ell^{m_1}, \ell^{m_4}, \ell^{m_5}\},\\
S(k^{n+2})  \cap \big[\ell^{m_0},\ell^{m_{11}}\big] &= \{ \ell^{m_2}, \ell^{m_7}\},\\
S(k^{n+3})  \cap \big[\ell^{m_0},\ell^{m_{11}}\big] &= \{ \ell^{m_8}, \ell^{m_9}, \ell^{m_{10}}\}.
\end{split}
\end{equation}
Observe that $(k^n,k^{n+3},\ell^{m_0},\ell^{m_{11}}) \in D$. Moreover, 
\[
\card(k^{\N}\cap (k^n,k^{n+3}] = \card(\{k^{n+1},k^{n+2},k^{n+3}\}) = 3 =\card(\{\ell^{m_0}, \ell^{m_3}, \ell^{m_{6}}\} )=\card(S(k^n \cap [\ell^{m_0},\ell^{m_{11}})).
\]
Hence $(k^n,k^{n+3},\ell^{m_0},\ell^{m_{11}}) \in D(3,3)$. Note that
\begin{equation}\label{eq:D2}
\sigma_2^1(k^n,\ell^{m_0}) = \ell^{m_3},\ \sigma_2^2(k^n,\ell^{m_0}) = \ell^{m_6},\ \sigma_2^3(k^n,\ell^{m_0}) = \ell^{m_{11}}.
\end{equation}
Thus in this setting, the map $\tau$ in Lemma \ref{lem:existsubset} is a bijection between $\{0,1,2\}^2$ and 
\[
\{k^{n+1},k^{n+2},k^{n+3}\} \times \{\ell^{m_0},\ell^{m_3},\ell^{m_6}\},
\]
preserving the relevant successor functions. This is visualized in Figure \ref{fig:gridexample}.
\end{ex}

\noindent We have seen how elements of $D$ encode fragments of $\N^2$. The next step is to encode tuples of subsets of such fragments. Here we essentially combine the encoding from the previous subsection with the bijection $\tau$.

\begin{defn} For $c\in \N$, let $\Theta_{c}$ be the set of all triples $(K_1,K,L)\in k^{\N} \times k^{\N} \times \ell^{\N}$ such that 
    \begin{enumerate}
        \item $L\in S(K_1)$ and
    \item $    \card \{ L'\in S(K) \ : \ L < L' < \sigma_2(K_1,L)\} = c.$
    \end{enumerate}
Let $u,v\in \N$ be such that $v\leq u$. Define $\Omega_{u,v}$ to be the
    set of all triples $(K_1,K,L)\in k^{\N} \times k^{\N} \times \ell^{\N}$ such that  there is $c\in \N$ with $c< 2^u$ such that 
\begin{enumerate}
        \item $2^{v-1}$ appears in the binary representation of $c$, and
        \item $(K_1,K,L) \in \Theta_{c}$.
\end{enumerate}
\end{defn}

\noindent Note that for each $c$, the set $\Theta_{c}$ is definable in $(\N,+,k^{\N},\ell^{\N})$. Since $u$ is fixed and there are only finitely many numbers smaller than $2^u$, the set $\Omega_{u,v}$ is definable in $(\N,+,k^{\N},\ell^{\N})$. For $K_1 \in k^{\N}$, we write $\Omega_{u,v}(K_1)$ for the set of all $(K,L)\in k^{\N} \times \ell^{\N}$ such that $(K_1,K,L)\in \Omega_{u,v}$.

\begin{ex} We continue with the notation from Example \ref{ex:D}. In particular, we will be working with the tuple $(k^n,k^{n+3},\ell^{m_0},\ell^{m_{11}}) \in D$. We will explain how this tuple encodes a pair of subsets of $\N^2.$  Using \eqref{eq:D1} and \eqref{eq:D2}, we first observe that
$(k^n,k^{n+1},\ell^{m_0})\in \Theta_1$, because
\[
\{ L'\in S(k^n) \ : \ \ell^{m_0} < L' < \sigma_2(k^n,\ell^{m_0})\} =  \{ L'\in S(k^n) \ : \ \ell^{m_0} < L' <\ell^{m_3})\} = \{\ell^{m_2}\}.
\]
Similarly, we obtain $(k^n,k^{n+1},\ell^{m_3})\in \Theta_2$, since 
\[
\{ L'\in S(k^n) \ : \ \ell^{m_3} < L' < \sigma_2(k^n,\ell^{m_3})\} =  \{ L'\in S(k^n) \ : \ \ell^{m_3} < L' <\ell^{m_6})\} = \{\ell^{m_4},\ell^{m_5}\}.
\]
Indeed, it is easy to see, again using \eqref{eq:D1} and \eqref{eq:D2}, that
\begin{align*}
\{ (i,j)\in \{0,1,2\}^2 \ : \ (k^n,k^{n+i+1},\sigma_2^{j}(k^n,\ell^{m_0})) \in \Theta_1\} &= \{(0,0),(1,0),(1,2)\},\\
\{ (i,j)\in \{0,1,2\}^2 \ : \ (k^n,k^{n+i+1},\sigma_2^{j}(k^n,\ell^{m_0})) \in \Theta_2\} &= \{(0,1)\},\\
\{ (i,j)\in \{0,1,2\}^2 \ : \ (k^n,k^{n+i+1},\sigma_2^{j}(k^n,\ell^{m_0})) \in \Theta_3\} &= \{(2,2)\}.
\end{align*}
It is worth checking how this corresponds to the rows and columns marked with an X in Figure \ref{fig:patternexample}. For example, the fact that $(2,2)$ is in the third set, is due to the three columns marked with an X in the fourth row between the seventh and eleventh column.\newline
Note that the above subsets of $\N^2$ are necessarily disjoint. In order to encode arbitrary subsets of $\N^2$, we need to use $\Omega$. Before doing so, we choose the length $u$ of the tuple of subsets. For this example, it is natural to choose $u=2$, although any other choice would result in a valid tuple, too. Then we compute
\begin{align*}
   X_1 := \{ (i,j) \in \{0,1,2\}^2: (k^{n+i+1},\sigma_2^{j}(k^n,\ell^{m_0})) \in \Omega_{2,1}(k^n)\} &= \{(0,0),(1,0),(1,2),(2,2)\}\\
   X_2 := \{ (i,j) \in \{0,1,2\}^2: (k^{n+i+1},\sigma_2^{j}(k^n,\ell^{m_0})) \in \Omega_{2,2}(k^n)\} &= \{(0,1),(2,2)\}.
\end{align*}
Therefore the tuple $(k^n,k^{n+3},\ell^{m_0},\ell^{m_{11}})$ together with our choice $u=2$ encodes the pair $(X_1,X_2)$ of subsets of $\N^2$.
\end{ex}

\noindent In the following lemmas we prove that every tuple of subsets of $\{0,1,\dots,R-1\}^2$ is encoded this way as some element of $D(R,R)$.

\begin{lem}\label{lem:member}
Let $X=(X_1,\dots, X_u) \subseteq \mathcal{P}(\N^2)^u$ and $R\in \N$. Then there is $(K_1,K_2,L_1,L_2)\in D(R,R)$ such that  for all $i,j \in \{0,\dots,R-1\}$ and $v\in \{1,\dots, u\}$  
\[
(i,j) \in X_v \hbox{ if and only if }(\sigma_1^{i+1}(K_1),\sigma_2^{j}(K_1,L_1)) \in \Omega_{u,v}(K_1).
\]
\end{lem}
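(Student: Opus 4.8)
The plan is to combine the two encoding devices developed earlier: the pattern-realization lemma (Lemma \ref{lem:pickoutvalues}) for producing the geometry of the grid fragment inside $S(k^{n+i})$, and the matrix/word bookkeeping of Corollary \ref{cor:count} for reading off membership in the $X_v$. First I would feed the tuple $X = (X_1,\dots,X_u)$ of subsets of $\{0,\dots,R-1\}^2$ through the machinery of Section 3.2: form the matrix $B(X,R)$, then the $\{0,1,\dots,R\}$-word $r := w(B(X,R))$, and set $s+1$ to be its length. By Corollary \ref{cor:count}, for $0\le i,j<R$ and $v\in\{1,\dots,u\}$ we have $(i,j)\in X_v$ iff $2^{v-1}$ appears in the binary representation of $C(r,i+1,j+1)$, and $C(r,i+1,j+1)$ counts the occurrences of the letter $i+1$ strictly between the $(j+1)$-st and $(j+2)$-nd occurrence of the letter $0$ in $r$.

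Next I would apply Lemma \ref{lem:pickoutvalues} to this word $r$ with the same $R$: it yields $n\in\N_{>0}$ and an increasing sequence $m_0<m_1<\dots<m_s$ with
\[
S(k^{n+i})\cap[\ell^{m_0},\ell^{m_s}] = \{\ell^{m_j} : r_j = i\}
\]
for every $i\in\{0,\dots,R-1\}$. The candidate tuple is $(K_1,K_2,L_1,L_2) := (k^n, k^{n+R}, \ell^{m_0}, \ell^{m_s})$ — or rather $k^{n+?}$ chosen so the relevant counts work out; I'd double check whether one wants $K_2 = k^{n+R}$ and whether $L_2$ should be $\ell^{m_s}$ or the successor of $\ell^{m_s}$ in $S(k^n)$, since the definition of $D(R_1,R_2)$ uses the half-open interval $[L_1,L_2)$ and we need exactly $R$ elements of $S(K_1)$ inside it. The key observation making everything click is that the letter $0$ in $r$ marks precisely the elements of $S(k^n)$ (the ``row $0$'' of the grid), so the $j$-th block of $r$ between consecutive $0$'s corresponds, under the bijection $\tau$ of Lemma \ref{lem:existsubset}, to the column $\sigma_2^{j}(K_1,L_1) = \ell^{m_{Z(r,j+1)}}$, and a letter $i$ in that block corresponds to the grid point $(\sigma_1^{i}(K_1), \sigma_2^{j}(K_1,L_1)) = (k^{n+i}, \ell^{m_{Z(r,j+1)}})$ lying in $S(k^{n+i})$.

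With this dictionary in place, verifying the stated equivalence reduces to unwinding definitions: membership $(\sigma_1^{i+1}(K_1),\sigma_2^{j}(K_1,L_1))\in\Omega_{u,v}(K_1)$ says there is $c<2^u$ with $2^{v-1}$ in its binary expansion and $(K_1, k^{n+i+1}, \sigma_2^{j}(K_1,L_1))\in\Theta_c$, i.e. $c = \card\{L'\in S(k^{n+i+1}) : \sigma_2^{j}(K_1,L_1) < L' < \sigma_2^{j+1}(K_1,L_1)\}$; this cardinality is exactly the number of times the letter $i+1$ occurs strictly between the $(j+1)$-st and $(j+2)$-nd occurrence of $0$ in $r$, which is $C(r,i+1,j+1) = B(X,R)_{i+1,j+1}$. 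Hence $2^{v-1}$ appears in $c$'s binary expansion iff $(i,j)\in X_v$ by Corollary \ref{cor:count}. I would also need to confirm $(K_1,K_2,L_1,L_2)$ genuinely lands in $D(R,R)$: that $K_1\le K_2$, $L_1\le L_2$, $L_1,L_2\in S(K_1)$ (both $\ell^{m_0}$ and $\ell^{m_s}$ carry letter $0$ since $r$ begins and ends with $0$ by construction of $w$), and the two cardinality conditions $\card(k^\N\cap(K_1,K_2]) = R$ and $\card(S(K_1)\cap[L_1,L_2)) = R$ — the latter being where the count of $0$'s in $r$, namely $R+1$, enters. The main obstacle I anticipate is purely an off-by-one accounting: making sure the indexing conventions for $Z(r,j)$, $\sigma_1^{i+1}$ versus $\sigma_1^i$, the half-open versus closed intervals, and the ``$+1$'' shifts between matrix indices $\{1,\dots,R\}$ and grid coordinates $\{0,\dots,R-1\}$ are all mutually consistent; once those are pinned down the argument is a routine composition of Lemma \ref{lem:pickoutvalues}, Lemma \ref{lem:existsubset}, and Corollary \ref{cor:count}.
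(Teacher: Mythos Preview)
Your proposal is correct and follows essentially the same approach as the paper: form $r = w(B(X,R))$, apply Lemma~\ref{lem:pickoutvalues} to obtain $n$ and $m_0<\dots<m_s$, set $(K_1,K_2,L_1,L_2)=(k^n,k^{n+R},\ell^{m_0},\ell^{m_s})$, identify $\sigma_2^j(K_1,L_1)=\ell^{m_{Z(r,j+1)}}$ via the zeros of $r$, and then read off the equivalence from Corollary~\ref{cor:count} by computing the count in the definition of $\Theta_c$ as $C(r,i+1,j+1)$. Your anticipated off-by-one bookkeeping (half-open intervals, the $R+1$ zeros, the shift between $\sigma_1^{i+1}$ and the letter $i+1$) is exactly what the paper works through, and your choice of $L_2=\ell^{m_s}$ is the one the paper makes.
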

\begin{proof}
Let $s\in \N$ and $r=r_0\cdots r_s\in \{0,1,\dots, R\}^*$ be such that $w(B(X,R))=r.$ We apply Lemma \ref{lem:pickoutvalues} to $R,s$ and $r$. We obtain $t \in \mathbb{N}_{>0}$ and an increasing sequence $m_0<m_1<\dots<m_s$  of natural numbers such that for all $i\in\{0,\dots,R\}$
\begin{equation}\label{eq:choice}
        S(k^{t + i})\cap \big[\ell^{m_0},\ell^{m_s}\big] =\{ \ell^{m_j} \ : \  j \in \{1,\dots, s\} \wedge r_j = i\}.
\end{equation}
We now set $K_1 = k^t, K_2 = k^{t+R}, L_1 = \ell^{m_0},$ and  $L_2 = \ell^{m_s}$.\newline

\noindent We first show that $(K_1,K_2,L_1,L_2)\in D(R,R)$.
Observe that for $i\in \{0,\dots,R\}$, we have $\sigma_1^i(K_1) = k^{t+i}$.  Hence we obtain from \eqref{eq:choice} that
\begin{equation*}
S(K_1) \cap [\ell^{m_0},\ell^{m_s}] = \{ \ell^{m_i} \ : \ i \in \{0,\dots,s\} \wedge r_i =0\}.  
\end{equation*}
Thus, in particular, for each $i \in \{0,\dots,R\}$
\begin{equation}\label{eq:mzrj}
\sigma_2^i(K_1,L_1)= \ell^{m_{Z(r,i+1)}}.
\end{equation}
Since $r_1=r_s=0$, we have that $L_1,L_2 \in S(K_1).$ Thus $(K_1,K_2,L_1,L_2)\in D$. 
Since $0$ appears $(R+1)$-times in $r$, we have that
\[
\card \big(S(K_1) \cap [\ell^{m_1},\ell^{m_s})\big) = R,
\]
and hence $(K_1,K_2,L_1,L_2) \in D(R,R)$.\newline

\noindent Let $i,j \in \{0,\dots,R-1\}$. We now show that
\begin{equation}\label{eq:crij}
C(r,i+1,j+1) = \card \{ L' \in S(\sigma_1^{i+1}(K_1)) \ : \ \sigma_2^{j}(K_1,L_1) < L' < \sigma_2(K_1,\sigma_2^{j}(K_1,L_1))\}.
\end{equation}
Then the equivalence in the statement of the lemma for our choice of $(K_1,K_2,L_1,L_2)$ follows from the definition of $\Theta_{c}$ and $\Omega_{u,v}$ and Corollary \ref{cor:count}.\newline

\noindent We now establish \eqref{eq:crij}. 
From \eqref{eq:choice} and \eqref{eq:mzrj} we deduce
\begin{align*}
\{ L' \in S(\sigma_1^{i+1}(K_1)) \ &: \ \sigma_2^j(K_1,L_1) < L' < \sigma_2(K_1,\sigma_2^j(K_1,L_1))\} \\&= S(k^{t+i+1}) \cap (\ell^{m_{Z(r,j+1)}},\ell^{m_{Z(r,j+2)}}) \\
&= \{ \ell^{m_p} : Z(r,j+1) < p < Z(r,j+2) \wedge r_p = i+1\}.
\end{align*}
The cardinality of the latter set can be computed as:
\begin{align*}
\card \{ \ell^{m_p} : Z(r,j+1) &< p < Z(r,j+2) \wedge r_p = i+1\}\\ &= \card \{ p \ : \ Z(r,j+1) < p < Z(r,j+2) \wedge r_p = i+1\}\\ &= C(r,i+1,j+1).
\end{align*}
This proves \eqref{eq:crij}.
\end{proof}
 


\begin{proof}[Proof of Theorem B]
Let $\mathcal{M}$ be a Turing machine and let $\mathcal{L}$ be the first-order language of \linebreak $(\N,+,k^{\N},\ell^{\N})$. In the following, we find an $\mathcal{L}$-sentence $\varphi_{\mathcal{M}}$ such that $\mathcal{M}$ halts when started on the empty tape if and only if $(\N,+,k^{\N},\ell^{\N})\models \varphi_{\mathcal{M}}$. Our construction of $\varphi_{\mathcal{M}}$ is effective. Thus the undecidability of the theory of $(\N,+,k^{\N},\ell^{\N})$ follows from the undecidability of the halting problem.\newline

\noindent Let $\mathcal{M}=(Q,\Sigma,\delta,q_0,q_{accept})$. Without loss of generality, we can assume that $Q=\{1,\dots,u_1\}$, $\Sigma=\{1,\dots,u_2\}$, $q_0=1$ and $q_{accept}=2$, and the transition function $\delta=(\delta_1,\delta_2,\delta_3)$ is a total function from $[u_1]\times [u_2]$ to $[u_1] \times [u_2]\times \{-1,1\}$. \newline

\noindent It is well-known that subsets of $\N^2$ can be used to code a computation on $\mathcal{M}$. Here we follow the argument in the proof of \cite[Theorem 16.5]{Thomas} and its presentation in the proof of \cite[Theorem 7.1]{hnp}. Let $A_1,\dots,A_{u_1}, B_1,\dots B_{u_2} \subseteq \N^2$. The $A_i$'s are used to store the current state of $\mathcal{M}$; that is, for $(s,t)\in \N^2$, we have $(s,t) \in A_i$ if and only if at step $s$ of the computation, $\mathcal{M}$ is in state $i$ and its head is over the $t$-th cell of the tape. Similarly, the $B_j$'s code the symbols written on the tape: $(s,t)\in B_j$ if and only if $j$ is written on the the $t$-th cell of the tape at step $s$ of the computation. Thus $\mathcal{M}$ halts when started on the empty tape if and only if there are finite  $A_1,\dots,A_{u_1},B_1,\dots B_{u_2} \subseteq \N^2$ and $R_1,R_2 \in \N$ such that
\begin{enumerate}
    \item the $A_i$'s are pairwise disjoint subsets of $\{0,\dots,R_1-1\}\times \{0,\dots,R_2-1\}$ and the $B_j$'s are pairwise disjoint of $\{0,\dots,R_1-1\}\times \{0,\dots,R_2-1\}$,
    \item $(0,0)\in A_1$; i.e.\ the computation starts in the initial state and with the head over the $0$-th cell,
    \item $(0,t) \notin \bigcup_{j=1}^{u_2} B_j$ for all $t\in \{0,\dots,R_2-1\}$; i.e.\ $\mathcal{M}$ is started on the empty tape,
    \item $A_2\neq \emptyset$; i.e.\ $\mathcal{M}$ eventually halts,
    \item for each $s\in \{0,\dots,R_1-1\}$, there is at most one $t\in \{0,\dots,R_2-1\}$ such that $(s,t)\in \bigcup_{i=1}^{u_1} A_i$; i.e.\ at each step of the computation, $\mathcal{M}$ is in exactly one state,
    \item If $(s,t) \in B_j$, then
    \begin{enumerate}
        \item if $(s,t) \notin \bigcup_{i=1}^{u_1} A_i$, then $(s+1,t)\in B_j$; i.e.\ if the head is not over the $t$-th cell, then the symbol in this cell does not change,
        \item if $(s,t)\in A_i$, then $(s+1,t) \in B_{\delta_2(i,j)}$ and $(s+1,t+\delta_3(i,j)) \in A_{\delta_1(i,j)}$; i.e.\ if the head is over the $t$-th cell and $\mathcal{M}$ is in state $i$, then the appropriate transition rule is applied.
    \end{enumerate}
\end{enumerate}

\noindent It is left to construct a $\mathcal{L}$-sentence $\varphi_{\mathcal{M}}$ such that $(\N,+,k^{\N},\ell^{\N})\models \varphi_{\mathcal{M}}$ if and only if there are $A_1,\dots,A_{u_1}, B_1,\dots B_{u_2} \subseteq \N^2$ and $R_1,R_2\in \N$ satisfying (1)-(6). Set $u=u_1+u_2$. Keeping Lemma \ref{lem:member} in mind, we define an $\mathcal{L}$-formula $\theta_{\mathcal{M}}(K_1,K_2,L_1,L_2)$ as follows:
\begin{align}
 \forall K &\in k^{\N} \forall L \in S(K_1) (K_1< K \leq K_2 \wedge L_1 \leq L < L_2) \rightarrow \notag \\
&\Bigg[\bigwedge_{1\leq i,j \leq u_1, i\neq j} (K_1,K,L) \notin \Omega_{u,i} \cap \Omega_{u,j}  \wedge \bigwedge_{u_1 < i,j \leq u, i\neq j} (K_1,K,L) \notin \Omega_{u,i} \cap \Omega_{u,j} \raisetag{-25pt}\tag{a}\\
&\qquad \wedge (K_1,\sigma_1(K_1),L_1) \in \Omega_{u,1}  \wedge \bigwedge_{j=1}^{u_2} (K_1,\sigma_1(K_1),L) \notin \Omega_{u,u_1+j}\raisetag{-20pt}\tag{b}\\
&\qquad \wedge \exists K' \exists L'\in S(K_1) \ K_1<K'\leq K_2 \wedge (K_1,K',L') \in \Omega_{u,2} \raisetag{-15pt} \tag{c}\\
&\qquad \wedge \Big( \exists L' \in S(K_1) \ \bigvee_{i=1}^{u_1} (K_1,K,L') \in \Omega_{u,i} \wedge \forall L'' \in S(K_1) (\bigvee_{i=1}^{u_1} (K_1,K,L'') \in \Omega_{u,i}) \rightarrow L'=L''\Big)\raisetag{-25pt}\tag{d}\\
& \qquad \wedge \bigwedge_{j=1}^n (K_1,K,L) \in \Omega_{u,u_1+j} \rightarrow  \bigg( \Big(\big(\bigwedge_{i=1}^{u_1}  (K_1,K,L) \notin \Omega_{u,i}\big) \rightarrow (K_1,\sigma_1(K),L) \in \Omega_{u,u_1+j}\Big) \notag\\
&\qquad \qquad \qquad \wedge \bigwedge_{i=1}^{u_1}  \Big[(K_1,K,L) \in \Omega_{u,i} \rightarrow 
\Big((K_1,\sigma_1(K),L) \in \Omega_{u,u_1+\delta_2(i,j)} \raisetag{-23pt} \tag{e}\\
& \qquad \qquad \qquad \qquad \qquad \wedge (K_1,\sigma_1(K),\sigma_2^{\delta_3(i,j)}(K_1,L)) \in \Omega_{u,\delta_1(i,j)} \Big) \Big]\bigg) \Bigg] \notag
\end{align}
Note that statement (a) in the display above corresponds to statement (1), statement (b) to (2) and (3), statement (c) to (4), statement (d) to (5), and finally statement (e) to (6).  
Set 
\[
\varphi_{\mathcal{M}}:= \exists (K_1,K_2,L_1,L_2) \in D \ \theta_{\mathcal{M}}(K_1,K_2,L_1,L_2).
\]
We now show that $\varphi_{\mathcal{M}}$ has the desired property.\newline

\noindent Suppose $A_1,\dots,A_{u_1}, B_1,\dots B_{u_2} \subseteq \N^2$ and $R_1,R_2 \in \N$ satisfy
(1)-(6). Set $R:= \max\{R_1,R_2\}$. By Lemma \ref{lem:member} there exist $(K_1,K_2,L_1,L_2) \in D(R,R)$ such that for $i,j,v_1,v_2\in \N$ with $i,j< R$,  $v_1\leq u_1$ and $v_2\leq u_2$
\begin{itemize}
    \item $(i,j) \in A_{v_1}$ if and only if $(K_1,\sigma_1^{i+1}(K_1),\sigma_2^j((K_1,L_1)) \in \Omega_{u,v_1}$,
    \item $(i,j) \in B_{v_2}$ if and only if $(K_1,\sigma_1^{i+1}(K_1),\sigma_2^j((K_1,L_1)) \in \Omega_{u,u_2+v_2}$,
\end{itemize}
and for all $K \in k^{\N}$ and $L\in S(K_1)$ with $K_1\leq K \leq K_2 \wedge L_1 \leq L \leq L_2$ there are $i,j\in \N$ with $i,j\leq R$ such that $K = \sigma_1^i(K_1) \text{ and } L = \sigma_2^j((K_1,L_1))$. Thus our choice of $(K_1,K_2,L_1,L_2)$ satisfies $\theta_{\mathcal{M}}$, and hence $(\N,+,k^{\N},\ell^{\N})\models \varphi_{\mathcal{M}}$.\newline

\noindent Now suppose that $(\N,+,k^{\N},\ell^{\N})\models \varphi_{\mathcal{M}}.$ Then there is a tuple $(K_1,K_2,L_1,L_2)\in D$ such that \linebreak $\theta_{\mathcal{M}}(K_1,K_2,L_1,L_2)$ holds. Set
\[
 R_1:= \card \big(k^{\N} \cap (K_1,K_2]\big), \ R_2:= \card \big(S(K_1) \cap [L_1,L_2)).
\]
Let $(X_1,\dots,X_{u})\in \mathcal{P}(\{0,\dots,R_1-1\} \times \{0,\dots,R_2-1\})^{u}$ be such that
for all $i \in \{0,\dots,R_1-1\}$, $j \in  \{0,\dots,R_2-1\}$ and $v\in \{1,\dots, u\}$
\[
(i,j) \in X_v \text{ if and only if } (\sigma_1^{i+1}(K_1),\sigma_2^j(K_1,L_1)) \in \Omega_{u,v}(K_1).
\]
For each  $v=1,\dots,u_1$, set $A_v := X_v$, and for each $v=1,\dots,u_2$, set $B_u:= X_{u_1+v}$. Using Lemma \ref{lem:existsubset} and the fact that $\theta_{\mathcal{M}}(K_1,K_2,L_1,L_2)$ holds, we easily check that $A_1,\dots,A_{u_1},B_1,\dots,B_{u_2}$ and $R_1,R_2$ satisfy (1)-(6).
\end{proof}

\section{Model-theoretic classification}

Theorem B establishes that from a computational point of view, the theory ${\rm FO}(\mathbb{N},+,k^{\N},\ell^{\N})$ is intractable. Given that by \cite{S-undefinability} the structure $(\mathbb{N},+,k^{\N},\ell^{\N})$ does not define multiplication, the precise model-theoretic complexity of this structure is not immediately clear. Logicians, and foremost Shelah, have developed many different combinatorical tameness notions to capture various degrees of model-theoretic complexity. Here we consider the following class of properties introduced by Shelah \cite{Shelah-Strongly, Shelah-2dep}.

\begin{defn}
Let $T$ be a first-order theory. We say $T$ has the \textbf{$k$-independence property} if there is a formula $\varphi(x,y_1,\dots, y_{k})$ in the language of $T$ such that for every $n\in \N$ there is a model $M$ of $T$ and elements $a_{i,j}$ for $i\in \{1,\dots,k\}$ and $j\in \{0,\dots,n-1\}$
 such that for every $S \subseteq \{0,\dots,n-1\}^k$ there is $b_S\in M$ such that
\[
M \models \varphi(b_S,a_{1,j_1},\dots,a_{k,j_k}) \text{ if and only if } (j_1,\dots,j_k) \in S.
\]
\end{defn}

\noindent When $k=1$, this is better known as the independent property and abbreviated by \textbf{IP}. It is immediate from the definition that the $(k+1)$-independence property implies the $k$-independence property. It is important to note that the \emph{absence} of the $k$-independence property, in particular the absence of the 1-independence property, indicates tameness, while its presence provides evidence of complexity. For example, if $T$ does not have the 1-independence property, then every definable family has finite Vapnik-Chervonenkis dimension. For other consequences of the absence of the 1-independence property (this property is usually called \textbf{NIP}), see Simon \cite{Simon-Book}, and more generally of the absence of the $k$-independence property, see Chernikov, Palacin, Takeuchi \cite{CPT-n-dependence}. Our result here is the following.

\begin{thm}\label{thm:2ip}
Let $k,\ell\in \N_{\geq 2}$ be multiplicatively independent. Then the theory ${\rm FO}(\mathbb{N},+,k^{\N},\ell^{\N})$ has the $2$-independence property.
\end{thm}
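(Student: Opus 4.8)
The plan is to exhibit a single formula $\varphi(x,y_1,y_2)$ in the language of $(\N,+,k^{\N},\ell^{\N})$ witnessing the $2$-independence property, and to use Lemma \ref{lem:member} (or directly Lemma \ref{lem:pickoutvalues}) to realize arbitrary subsets $S\subseteq\{0,\dots,n-1\}^2$. Since the structure is a model of its own theory, it suffices to realize every such $S$ inside $(\N,+,k^{\N},\ell^{\N})$ itself. The natural choice is essentially the formula already built in the proof of Theorem B: take the parameter rows to be powers of $k$ and powers of $\ell$, and let $x$ range over elements of $k^{\N}$ that serve as a ``base point'' $K_1$ encoding a finite grid. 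Concretely, I would fix $n\in\N$, and for a given $S\subseteq\{0,\dots,n-1\}^2$ apply Lemma \ref{lem:member} with $u=1$, $R=n$, and $X_1=S$ to obtain $(K_1,K_2,L_1,L_2)\in D(n,n)$ such that for all $i,j\in\{0,\dots,n-1\}$ we have $(i,j)\in S$ iff $(\sigma_1^{i+1}(K_1),\sigma_2^{j}(K_1,L_1))\in\Omega_{1,1}(K_1)$.

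The key steps, in order: (i) pick the ``coordinate'' parameters. For the first coordinate I would set $a_{1,i}:=k^{i}$ for $i\in\{0,\dots,n-1\}$, and for the second coordinate $a_{2,j}$ should encode ``take the $j$-th successor in $S(K_1)$ starting from $L_1$''; since $\sigma_2^j$ depends on $K_1$, it is cleaner to let the second parameters be the absolute values $\ell^{m_j}$ produced by Lemma \ref{lem:pickoutvalues}, so $a_{2,j}:=\ell^{m_j}$. (ii) Write a formula $\varphi(x,y_1,y_2)$ asserting: $x\in k^{\N}$, $y_1\in k^{\N}$, $y_2\in\ell^{\N}$, $x<y_1$, $y_2\in S(x)$, and $(y_1\cdot x^{-1}?,\dots)$ — more precisely, that $(x,y_1,y_2)$ lies in the definable set $\Omega_{1,1}$ with first argument $x$; i.e.\ $\varphi(x,y_1,y_2):\equiv (x,y_1,y_2)\in\Omega_{1,1}$. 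This is a fixed first-order formula because $\Omega_{1,1}$ is definable in $(\N,+,k^{\N},\ell^{\N})$ (as noted after its definition). (iii) Set $b_S:=K_1$. Then by Lemma \ref{lem:member} combined with the identifications $\sigma_1^{i+1}(K_1)=k^{t+i+1}$ and $\sigma_2^{j}(K_1,L_1)=\ell^{m_{Z(r,j+1)}}$ from its proof, one reads off $\varphi(b_S,a_{1,i},a_{2,j})$ iff $(i,j)\in S$. One must be slightly careful: the parameters $a_{2,j}$ as produced do not literally equal $\sigma_2^{j}(K_1,L_1)$ but rather $\ell^{m_j}$, whereas the successor enumeration of $S(K_1)$ picks out the subsequence indexed by the zero-positions of $r$; so I would instead either (a) absorb the bookkeeping into $\varphi$ by having $\varphi$ quantify ``$y_2$ is the $j$-th element'' — but $j$ is not available — or, more simply, (b) re-run Lemma \ref{lem:pickoutvalues} / Lemma \ref{lem:member} so that the word $r$ is built to make $m_j$ itself the $j$-th element of $S(K_1)\cap[L_1,L_2]$ on the nose (e.g.\ by padding, choosing $u=2$ with one coordinate a dummy ``singleton-per-column'' pattern, exactly as in the examples), so that $a_{2,j}=\sigma_2^{j}(K_1,L_1)$ literally. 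Either way the alignment is routine once the right encoding is chosen.

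I expect the main obstacle to be purely bookkeeping: matching the indexing in the conclusion of Lemma \ref{lem:pickoutvalues} (which enumerates $m_0<\dots<m_s$ together with a word $r$ telling which ``row'' each column belongs to) against the clean statement ``$(i,j)\in S$ iff $\varphi(b_S,k^{i},\ell^{m_j})$'' — that is, ensuring that the second-coordinate parameters form an honest successor chain in $S(K_1)$ rather than a sparse subsequence of it. This is exactly the translation already performed inside the proof of Theorem B (equations \eqref{eq:mzrj}--\eqref{eq:crij}), so no new idea is needed; one essentially specializes that construction to $u=1$ and notes that the resulting $\Omega_{1,1}$ membership condition is a single fixed formula $\varphi(x,y_1,y_2)$. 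Finally, since $n$ was arbitrary and $(\N,+,k^{\N},\ell^{\N})$ is itself a model of ${\rm FO}(\N,+,k^{\N},\ell^{\N})$, this establishes the $2$-independence property; and it cannot be improved to the $1$-independence property being \emph{absent} in a stronger sense, but that is not claimed here.
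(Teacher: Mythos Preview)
Your approach matches the paper's exactly: take $\varphi$ to express membership in $\Omega_{1,1}(K_1)$, obtain $b_S=(K_1,K_2,L_1,L_2)$ from Lemma~\ref{lem:member} with $u=1$, and use coordinate parameters $\sigma_1^{i+1}(K_1)$, $\sigma_2^{j}(K_1,L_1)$; your fix~(b) lands precisely on the paper's choice of the $a$'s.

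The bookkeeping concern you raise is more than cosmetic, and it points at a genuine quantifier-order gap that the paper's terse proof shares. In the definition of $\mathrm{IP}_2$ the parameters $a_{i,j}$ must be fixed \emph{before} $S$, with only $b_S$ varying afterward; here $\sigma_1^{i+1}(K_1)$ and $\sigma_2^{j}(K_1,L_1)$ depend on $(K_1,L_1)$, which Lemma~\ref{lem:member} produces \emph{from} $S$. What is literally established is only ``for every $S$ there exist $a$'s and $b$'', and that weaker statement does \emph{not} imply $\mathrm{IP}_2$: already for $\mathrm{IP}_1$, the formula $x=y$ satisfies it in any infinite structure (take $a_j=0$ for $j\in S$, $a_j=j{+}1$ otherwise, $b=0$) yet is NIP. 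Your instinct to fix $a_{1,i}=k^{i}$ independently of $S$ is the right one, but then $\varphi$ would have to recover $K_1\cdot k^{i+1}$ from $K_1$ and $k^{i}$, i.e.\ multiply within $k^{\N}$, which is not obviously definable here; and your alternative $a_{2,j}=\ell^{m_j}$ still depends on the word $r$, hence on $S$, so neither fix~(a) nor fix~(b) closes the gap. A complete argument would run the encoding once so that $(K_1,L_1)$ --- and with it the $a$'s --- is fixed across \emph{all} $S\subseteq\{0,\dots,n-1\}^2$, for instance by laying all $2^{n^2}$ subsets side by side inside a single $X$ and pushing the choice of $S$ into an extra offset component of $b_S$.
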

\begin{proof}
Let $\varphi(K_1, K_2, L_1, L_2, x, y)$ be the formula in the language ${\rm FO}(\mathbb{N},+,k^{\N},\ell^{\N})$ stating that $x \in k^\mathbb{N}$ with $K_1 \leq y \leq K_2$, $y \in \ell^\mathbb{N}$ with $L_1 \leq y \leq L_2$, and $(x, y) \in \Omega_{1,1}(K_1)$. Then it is immediate from Lemma \ref{lem:member} that if $X \subseteq \mathcal{P}(\mathbb{N}^2)$ and $R \in \mathbb{N}$, we have that for all $i, j \in \mathbb{N}$ with $i, j \in \{0,\dots,R-1\}$, 
\[
(i, j) \in X \text{ if and only if }\varphi(K_1, K_2, L_1, L_2, \sigma_1^{i+1}(K_1),\sigma_2^{j}(K_1,L_1)). 
\]
Thus the formula $\varphi$ witnesses that ${\rm FO}(\mathbb{N},+,k^{\N},\ell^{\N})$ has the $2$-independence property.
\end{proof}

\section{Conclusion and open questions}

\subsection{Different domains} The proof of Theorem B does not only work over $\N$. 

\begin{prop}\label{prop:reals} Let $k,\ell \in \N_{\geq 2}$ be multiplicatively independent. Then the theory \linebreak ${\rm FO}(\mathbb{R},<,+,k^{\N},\ell^{\N})$ is undecidable, but $(\mathbb{R},<,+,k^{\N},\ell^{\N})$ does not define multiplication on $\R$ (even when parameters are allowed).
\end{prop}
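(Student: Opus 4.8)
\textbf{Proof plan for Proposition \ref{prop:reals}.}

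The plan is to transfer the undecidability argument for Theorem B to the real setting by observing that the structure $(\N,+,k^\N,\ell^\N)$ is interpretable in $(\R,<,+,k^\N,\ell^\N)$, and then to establish the non-definability of multiplication by a separate, measure-theoretic or o-minimality-style argument. For the first half: every ingredient used in the proof of Theorem B — the function $\lambda$, the predicate $S$, the sets $D$, $\Theta_c$, $\Omega_{u,v}$, the successor functions $\sigma_1$, $\sigma_2$ — was built from $+$, the order (which is $+$-definable over $\N$), and the two predicates $k^\N$, $\ell^\N$, all of which make sense verbatim inside the reals. So the first step is to note that $\N$ itself is definable in $(\R,<,+,k^\N,\ell^\N)$: indeed $\N$ is the set of finite sums of elements of $k^\N$ that lie below the least element of $k^\N$ greater than $1$, or more simply one checks that a standard formula (using that $1 = \min k^\N$ and that $\N$ is the smallest $+$-closed set containing $1$, which is expressible here because $k^\N$ is discrete and unbounded) defines $\N$; alternatively, one can work with the already-definable copy of $\N$ given by $\{\,x : x=0 \vee x \in k^\N \cdot (\text{something})\,\}$ — the key point is only that $(\N,+,k^\N,\ell^\N)$ is (parameter-free) interpretable in $(\R,<,+,k^\N,\ell^\N)$. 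Once that interpretation is fixed, the sentence $\varphi_{\mathcal M}$ constructed in the proof of Theorem B, relativized to the definable copy of $\N$, is true in $(\R,<,+,k^\N,\ell^\N)$ if and only if $\mathcal M$ halts on the empty tape, and the map $\mathcal M \mapsto \varphi_{\mathcal M}$ is still effective; hence ${\rm FO}(\R,<,+,k^\N,\ell^\N)$ is undecidable.

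For the second half — that multiplication on $\R$ is not definable, even with parameters — the approach is to invoke the structural results on expansions of the ordered additive group of reals by a multiplicatively dense-in-$\R_{>0}$ set (or more precisely, by a discrete $k^\N$-type set), in the spirit of Friedman and Miller \cite{FM} and the work of the second author \cite{S-undefinability}. The idea is that $(\R,<,+,k^\N,\ell^\N)$ falls into a class of expansions of $(\R,<,+)$ that are "tame" in the sense of not defining any set whose presence would force definability of multiplication — concretely, one shows that every definable subset of $\R$ is a finite union of points and open intervals together with a "small" set coming from $k^\N\cup\ell^\N$, so the structure cannot define a set like $\{(x,y,z) : z = xy\}$ whose fibers are arbitrary lines of all slopes. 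The cleanest route is to cite \cite{S-undefinability}, where exactly this non-definability of multiplication in $(\N,+,k^\N,\ell^\N)$ (and, I expect, in the real analogue, since the Friedman–Miller machinery is stated for expansions of $(\R,<,+)$) is proved; one then only needs to check that allowing parameters does not help, which follows because the relevant dichotomy theorems of \cite{FM} are about parametrically definable sets.

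The main obstacle is the second half: unlike the undecidability transfer, which is essentially bookkeeping, the non-definability of multiplication is genuinely a statement about the real structure and requires knowing that the results of \cite{S-undefinability,FM} apply to $(\R,<,+,k^\N,\ell^\N)$ rather than merely to $(\N,+,k^\N,\ell^\N)$. If \cite{S-undefinability} is stated only over $\N$, the fallback is to prove directly that $k^\N$ (as a subset of $\R$) is a closed discrete set of the form treated in \cite{FM}, apply their structure theorem to conclude that $(\R,<,+,k^\N)$ is "noiseless"/d-minimal-like, and then argue that adjoining the second discrete set $\ell^\N$ — which by multiplicative independence and Kronecker density interacts with $k^\N$ only through the (small, definable) set $S$ — still does not produce multiplication, because any definable function $\R\to\R$ is, off a small set, locally a translation. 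Verifying this last claim carefully is where the real work lies; everything else is routine.
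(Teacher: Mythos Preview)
For the undecidability half, your observation that all the ingredients of Theorem B (the function $\lambda$, the relation $S$, the sets $D$, $\Theta_c$, $\Omega_{u,v}$, and the sentence $\varphi_{\mathcal M}$) make sense verbatim in $(\R,<,+,k^{\N},\ell^{\N})$ is exactly the paper's argument. But your detour through ``first define $\N$'' is both unnecessary and not justified: the formulas in the proof of Theorem B quantify only over elements of $k^{\N}$ and $\ell^{\N}$ and use $+$ and $<$, so there is no need to recover $\N$ as a definable set. Your sketches for defining $\N$ (``finite sums of $1$'', ``smallest $+$-closed set containing $1$'') are not first-order, and it is not at all clear that $\N$ is definable in $(\R,<,+,k^{\N},\ell^{\N})$; you should simply drop that step.

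For the non-definability of multiplication, the paper takes a much shorter route than your proposed structural analysis. Let $(\R,<,+,\Z)^{\#}$ denote the expansion of $(\R,<,+,\Z)$ by a predicate for \emph{every} subset of $\Z^n$, for every $n$. Since $k^{\N}$ and $\ell^{\N}$ are subsets of $\Z$, the structure $(\R,<,+,k^{\N},\ell^{\N})$ is a reduct of $(\R,<,+,\Z)^{\#}$, so anything definable (with parameters) in the former is definable in the latter. Friedman and Miller \cite[Theorem A]{FM} show directly that $(\R,<,+,\Z)^{\#}$ does not define multiplication on $\R$, even with parameters; this finishes the argument in one line. Your plan to analyze definable sets and functions in $(\R,<,+,k^{\N},\ell^{\N})$ from scratch, or to lift \cite{S-undefinability} from $\N$ to $\R$, would presumably succeed but is far more work than needed, and you yourself flag the final step as unfinished.
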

\begin{proof}
 The reader can check that the same argument as in the proof of Theorem B shows the undecidability of ${\rm FO}(\mathbb{R},<,+,k^{\N},\ell^{\N})$. Let $(\R,<,+,\Z)^{\#}$ be the expansion of $(\R,<,+,\Z)$ by predicates for every subset $X$ of $\Z^n$ for every $n\in \N$. Clearly, every set definable with parameters in  $(\mathbb{R},<,+,k^{\N},\ell^{\N})$ is also definable with parameters in $(\R,<,+,\Z)^{\#}$. However, by \cite[Theorem A]{FM} the structure $(\R,<,+,\Z)^{\#}$ does not define multiplication even with parameters (see \cite[Example on p.56]{FM}). 
\end{proof}

\noindent The argument for undecidability also extends unchanged to  ${\rm FO}(\mathbb{Q},<,+,k^{\N},\ell^{\N})$, and it is natural to ask for which other domains our argument goes through. Until now, undecidability was only known for the theory $\mathrm{FO}(\mathbb{R},<,+,\cdot,k^{\N},\ell^{\N})$ by \cite[Theorem 1.3]{H2010} and, more generally, for the theory $\mathrm{FO}(K,<,+,\cdot,k^{\N},\ell^{\N})$, where $K$ is a subfield of $\R$, by \cite[Theorem C]{H2021}. By Propostion \ref{prop:reals} the structure $(\mathbb{R},<,+,\cdot,k^{\N},\ell^{\N})$ is strictly more expressive than $(\mathbb{R},<,+,k^{\N},\ell^{\N})$.\newline

\noindent Observe that above we always added order to the structure when changing the domain. In $(\N,+)$ the order relation is definable, but this is no longer true even when replacing $\N$ by $\Z$. And, indeed, when we consider these other domains without order, we see very different behaviors. In contrast to Theorem \ref{thm:2ip} the theory $\mathrm{FO}(\Z,+,k^{\N},\ell^{\N})$ does \emph{not} even have the 1-independence property by Conant \cite{Conant-Subgroups}. Hence it should be considered model-theoretically tame. Even more is true: By Wang \cite{Wang-thesis} the theory $\mathrm{FO}(\Z,+,k^{\N},\ell^{\N})$ is decidable if and only if we can compute for all $a_1,\dots,a_n\in \Q$ the set of all tuples $(z_1,\dots,z_n) \in (k^{\Z}\ell^{\Z})^n$ such that
\[
a_1z_1 + \dots + a_nz_n = 1, \hbox{ and } \sum_{i \in J} a_i z_i \neq 0 \hbox{ for all $J\subsetneq \{1,\dots,n\}$.}
\]
While it is known that these sets are always finite for all $a_1,\dots,a_n\in \Q$, it is an open question whether they are computable. However, an effective version of the $p$-adic subspace theorem by Schlickewei \cite{Schlickewei-padic}, which is an open problem in number theory, would imply the desired computability. It is an interesting question whether these methods could be used to show the decidability of fragments of $\mathrm{FO}(\N,+,k^{\N},\ell^{\N})$. It seems likely that the existential fragment of this theory is indeed decidable under the same assumption as in \cite{Wang-thesis}.

\subsection{Other numeration systems} Cobham's theorem has been extended from classical base-$k$ numeration to other non-classical numeration systems, in particular by B\`es \cite{Bes-CS}. See \cite{durandrigo} for a survey. We call a strictly increasing sequence $U=(U_n)_{n\in \N}$ of natural numbers a \textbf{numeration system} if $U_0=1$ and $\{U_{n+1}/U_n \ : \ n \in \N\}$ is bounded. We say such a system $U$ is \textbf{linear} if there are $d_0,\dots,d_{k-1}\in \Z$ such that for every $n\in \N_{\geq k}$
\[
U_n = d_{k-1}U_{n-1} + \dots + d_0 U_{n-k}.
\]
The polynomial $P_U=X^k - d_{k-1}X^{n-1} - \dots - d_1 X - d_0$ is the \textbf{characteristic polynomial} of $U$. 
There is a natural extension of $k$-recognizability to $U$-recognizability of subsets of $\N^n$ (see for example \cite{Bes-CS}). Let $\theta, \theta'$ be multiplicatively independent Pisot numbers, and let $U,U'$ be two linear numeration systems such that $P_U$ is the minimal polynomial of $\theta$ and $P_{U'}$ is the minimal polynomial of $\theta'$. By \cite[Theorem 3.1]{Bes-CS}, whenever $X\subseteq \N^n$ is both $U$- and $U'$-recognizable, then $X$ is definable in $(\N,+)$. It is natural to ask whether Theorem A extends as well. 
\begin{ques}\label{quest:nonstandard}
Let $X\subseteq \N^m$ be $U$-recognizable, let $Y\subseteq \N^n$ be $U'$-recognizable such that $X,Y$ are not definable in $(\N,+)$. Is the theory $\mathrm{FO}(\N,+,X,Y)$ undecidable? 
\end{ques}
\noindent Let $X\subseteq \N^m$ be $U$-recognizable and not definable in $(\N,+)$. As far as we know, it is even open whether the analogue of Fact \ref{fact:bes} holds in this setting; that is, whether $(\N,+,X)$ defines $\{ U_n \ : \ n\in\N\}$. However, an analog of Fact \ref{fact:BHMV} is known. Let $V_U: \N \to \N$ be the function that maps $0$ to $1$ and, if $x\geq 1$, then $V_U(x)$ is the least $U_i$ appearing the normalized $U$-representation of $x$ with non-zero coefficient. Then by Bruy\`ere and Hansel \cite{BH-Betrand} the theory $\mathrm{FO}(\N,+,V_U)$ is decidable, and, $X$ is $U$-recognizable if and only if $X$ is definable in $(\N,+,V_U)$.
Thus also in this case, a positive answer to Question \ref{quest:nonstandard} implies the appropriate version of the Cobham-Sem\"enov theorem.

\subsection{Other related work} There is recent related work on undecidability in expansions of $(\N,+,V_k)$ by B\`es \cite{BesCard}, and earlier work by Elgot and Rabin \cite{ElgotRabin} and Thomas \cite{ThomasUnde}. In \cite{PRW} Point, Rigo and Waxweiler adapt \cite{Villemaire} to the additive reduct of certain Euclidean rings. It is worth asking whether our new method also works in this more general setting. Another related recent work is Haase and R\'{o}zycki \cite{HasseRoz}. There is also a new and very short proof of Cobham's theorem by Krebs \cite{Krebs}, although we don't see an immediate connection to our work.

\bibliographystyle{amsplain}
\bibliography{biblio}
\end{document}